\DeclareFontFamily{U}{tipa}{}
\DeclareFontShape{U}{tipa}{m}{n}{<->tipa10}{}
\newcommand{\arc@char}{{\usefont{U}{tipa}{m}{n}\symbol{62}}}%
\newcommand{\arc}[1]{\mathpalette\arc@arc{#1}}
\newcommand{\arc@arc}[2]{%
  \sbox0{$\m@th#1#2$}%
  \vbox{
    \hbox{\resizebox{\wd0}{\height}{\arc@char}}
    \nointerlineskip
    \box0
  }%
}
\newcommand{\beq}{\begin{equation}}
\newcommand{\eeq}{\end{equation}}
\newcommand{\bea}{\begin{eqnarray}}
\newcommand{\eea}{\end{eqnarray}}
\newcommand{\beas}{\begin{eqnarray*}}
\newcommand{\eeas}{\end{eqnarray*}}
\newtheorem{theorem}{Theorem}[section]
\newtheorem{definition}[theorem]{Definition}
\newtheorem{proposition}[theorem]{Proposition}
\newtheorem{lemma}[theorem]{Lemma}
\newtheorem{remark}[theorem]{Remark}
\newtheorem{foo}[theorem]{Remarks}
\newenvironment{Remark}{\begin{remark}\rm}{\end{remark}}
\newtheorem{example}{Example}
\theoremstyle{empty}
\newtheorem{refproof}{Proof}
\newenvironment{proof}{\addvspace{\medskipamount}\par\noindent{\it
Proof}.}
{\unskip\nobreak\hfill$\Box$\par\addvspace{\medskipamount}}
\newcommand{\fh}{\mathfrak h}
\newcommand{\R}{\mathbb R}
\newcommand{\ep}{\epsilon}
\newcommand{\p}{\mathbb P}
\newcommand{\inte}{\mathbf Int}
\newcommand{\cl}{\mathbf Cl}
\newcommand{\h}{\mathbf{H}_1}
\newcommand{\Span}{\mathrm {Span}}
\newcommand{\Z}{\mathbb Z}
\newcommand{\fL}{\mathfrak L}
\newcommand{\fI}{\mathfrak I}
\newlength{\mywidth}
\title{Very rare events for diffusion processes in short time}
\author{G\'erard Ben Arous, Jing Wang}
\date{}
\begin{document}
\maketitle

\begin{abstract}
We study the large deviation estimates for the short time asymptotic behavior of a strongly degenerate diffusion process. Assuming a nilpotent structure of the Lie algebra generated by the driving vector fields, we obtain a graded large deviation principle and prove the existence of those ``very rare events". In particular the first grade coincides with the classical Large Deviation Principle. 
\end{abstract}

\section{Introduction}

Large deviations principles for diffusions in short time are very well understood, as well as the related study of short-time asymptotics for heat kernels, at least since the work of Schilder (\cite{Shielder}), Varadhan (\cite{var}), Freidlin-Wentzell (\cite{FW}), and Azencott (\cite{Azencott}). 
We study here a rather new aspect of this classical question, and show that for non-elliptic diffusions, these estimates may miss the right order of magnitude for certain events. 

Consider the solution of the Stratonovich stochastic differential equation

\begin{equation}\label{eq-sde}
dx(t)=\sum_{i=1}^mX_i(x(t))\circ dw_t^i+X_0(x(t))dt, \quad x(0)=x_0,
\end{equation}
where $X_0,\dots, X_m$ are smooth vector fields on a manifold $M$ and $w_t^i$, $i=1,\dots, m$ are independent  standard Brownian motions. This process is naturally the diffusion generated by the operator $L = \frac{1}{2} \sum_{i=1}^m X_i^2 + X_0$. (We skip in this preliminary discussion the natural and well-known assumptions needed for the existence and uniqueness of this process.)
For $\ep >0$ , define the rescaled process $x^{\ep}(t) = x({\ep^2 t})$, generated by $\ep^2 L$, the classical results mentioned above give a Large Deviation Principle (or LDP) for the distribution $\p^{\ep}$ of the rescaled process $x^{\ep}$ on the path space $E= C([0,1], M)$ (in this form, it is due to \cite{Azencott}, see also the reference books \cite{Dembo-Zeitouni} and \cite{DS}).

\begin{theorem}\label{Azencott}
The distribution $\p^{\ep}$ satisfies a Large Deviation Principle at rate $\ep^{-2}$, with rate function $I$.
For any Borel set $A \subset E=C([0,1],M)$, 

\begin{equation}
\liminf_{\ep\to0}\ep^{2}\log\p(x^\ep\in A)\ge -\inf( I(\phi), \phi \in  \ring{A})
\end{equation}
and
\begin{equation}\label{eq-main-nil-gr-2}
\limsup_{\ep\to0}\ep^{2}\log\p(x^\ep\in A)\le -\inf( I(\phi), \phi \in \overline{A}).
\end{equation}

\end{theorem}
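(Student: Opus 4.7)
The plan is to deduce the LDP for $\p^\ep$ from Schilder's theorem for Brownian motion via the classical Freidlin--Wentzell--Azencott program, realizing $x^\ep$ as an exponentially continuous functional of the driving noise. First, note that the rescaled process solves
\begin{equation*}
dx^\ep(t)=\ep\sum_{i=1}^m X_i(x^\ep(t))\circ d\tilde w_t^i+\ep^2 X_0(x^\ep(t))\,dt,\qquad x^\ep(0)=x_0,
\end{equation*}
where $\tilde w_t^i=\ep^{-1}w^i_{\ep^2 t}$ is again a standard Brownian motion. For $h$ in the Cameron--Martin space $\mathcal H=H^1_0([0,1],\R^m)$, let $\phi^h$ solve the controlled ODE
\begin{equation*}
\dot\phi^h(t)=\sum_{i=1}^m X_i(\phi^h(t))\,\dot h^i(t),\qquad \phi^h(0)=x_0.
\end{equation*}
The candidate rate function is $I(\phi)=\inf\{\tfrac12\int_0^1|\dot h|^2\,dt:h\in\mathcal H,\ \phi^h=\phi\}$, set to $+\infty$ if no such $h$ exists.

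The second ingredient is Schilder's theorem: the law of $\ep\tilde w$ on $C_0([0,1],\R^m)$ satisfies an LDP at rate $\ep^{-2}$ with good rate function $J(h)=\tfrac12\int_0^1|\dot h(t)|^2\,dt$. If the skeleton map $F:h\mapsto\phi^h$ were continuous for the uniform topology, the contraction principle would deliver the result immediately with $I=J\circ F^{-1}$. The difficulty is that $F$ is defined only on $\mathcal H$ and is \emph{not} continuous in the sup norm, so a direct contraction is unavailable.

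The crucial technical step is therefore Azencott's exponential approximation: show that for every $R,\delta>0$ there exists $\rho>0$ such that for every $h\in\mathcal H$ with $J(h)\le R$,
\begin{equation*}
\limsup_{\ep\to 0}\ep^2\log\p\brak{\|x^\ep-\phi^h\|_\infty>\delta,\ \|\ep\tilde w-h\|_\infty\le\rho}=-\infty.
\end{equation*}
Combined with Schilder, this makes $x^\ep$ and $F(\ep\tilde w)$ exponentially equivalent in the sense of Dembo--Zeitouni, and the LDP transfers with the announced rate function $I$.

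The main obstacle is proving this exponential approximation. The standard Azencott argument performs a Girsanov shift of the driving Brownian motion by the deterministic drift $\ep^{-1}\dot h$, which reduces the estimate to a pure small-noise bound on $\|x^\ep-\phi^h\|_\infty$. Decomposing the Stratonovich integral into its It\^o part plus the usual correction, one localizes by a stopping time to remain in a compact coordinate chart of $M$ where the $X_i$ and their derivatives are Lipschitz with uniform bounds; an exponential martingale (Bernstein-type) inequality controls the It\^o term, integration by parts in time handles the contribution of $\ep\tilde w-h$, and a Gronwall argument closes the loop to yield the required super-exponentially small probability. The uniformity in $h$ over the sublevel set $\{J\le R\}$ follows from its compactness in $\mathcal H$, and the exponential tightness needed to promote the weak upper bound to \eqref{eq-main-nil-gr-2} for arbitrary Borel $A$ comes from the same martingale estimates applied with $h=0$ on a sequence of compact sets exhausting $M$.
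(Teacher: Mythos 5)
The paper does not actually give a proof of Theorem \ref{Azencott}: it is quoted as a known result, with references to Azencott's Saint-Flour notes, Dembo--Zeitouni, and Deuschel--Stroock, and the text explicitly says the standard existence and regularity hypotheses are being suppressed. Your proposal is a correct, condensed outline of exactly the argument in those references --- rescaling to the small-noise SDE driven by $\ep\tilde w$, Schilder's theorem, the observation that the It\^o solution map is not sup-norm continuous so the bare contraction principle cannot be used, Azencott's exponential approximation lemma with the Girsanov shift plus Gronwall/exponential-martingale estimates, uniformity over $\{J\le R\}$ by compactness, and exponential tightness for the upper bound on Borel sets. In particular you correctly capture the point, emphasized in the paper, that the drift $\ep^2 X_0$ drops out of the rate function because it is subdominant in the exponential approximation step. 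Since the paper offers no proof of its own, there is nothing to contrast beyond noting that your sketch is the same classical Azencott route the paper is citing.
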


In this Large Deviation Principle, the rate is $\ep^{-2}$ and the rate function $I$ on the path-space $C([0,1], M)$ is given by

\begin{equation}\label{ratefunction}
I(\phi) = \inf (\frac{1}{2} \|h\|_{\h}^2, \Phi_{x_0}(h) = \phi)
\end{equation}
where $\phi \in E$ and the functional $\Phi_{x_0}$ is defined on the Cameron Martin Space $\h$, by the following differential equation. For $x_0 \in  M$ and $h \in \h$ define $\phi =\Phi_{x_0}(h)$ to be the solution to 
\begin{equation}\label{ode}
d\phi(t) =\sum_{i=1}^mX_i(\phi(t))\circ dh_t^i, \quad \phi(0)=x_0.
\end{equation}

A remarkable fact about the rate function $I$ is that it does not depend at all on the drift term $X_0$ but only on the diffusion part, i.e. the vector fields $(X_1,\dots, X_m)$.
The sets of paths defined by the image of this map $\Phi_{x_0}(h)$ are usually called \emph{(finite energy) horizontal paths}.
So that the rate function $I(\phi)$ is finite if and only if $\phi$ is a horizontal path. It is thus clear that $\log \p^{\ep}(A)$  is of order at least $\ep^{-2}$ if the interior of $A$ contains a (finite energy) horizontal path. The order of magnitude of  $\log \p^{\ep}(A)$ might be much smaller if the closure of $A$ contains no such horizontal (finite energy) paths. We will call such sets \emph{non-horizontally accessible}. Our goal here is to begin the study of the order of magnitude for the probability of such sets which are very rare events. 

Let us first concentrate on the simplest and most natural  events $A$ of such kind, i.e. those that only depend on the final point of the diffusion path. Consider the distribution $Q^{\ep}$ of the end point of the diffusion, i.e. of $x^{\ep}(1) = x({\ep^2 })$.
The classical LDP given above obviously implies the following LDP for $Q^{\ep}$.

\begin{theorem}\label{Azencott2}
The distribution $Q^{\ep}$ satisfies a Large Deviation Principle at rate $\ep^{-2}$, with rate function $J$
where 
\begin{equation}
J(y) = \inf ( I(\phi), \phi(1) = y) = \inf \left( \frac{1}{2} \|h\|_{\h}^2, \Phi_{x_0}(1)=y \right).
\end{equation}
\end{theorem}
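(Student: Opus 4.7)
The plan is to deduce Theorem \ref{Azencott2} from Theorem \ref{Azencott} by a direct application of the contraction principle. The endpoint evaluation map
\[
\pi_1 : E = C([0,1], M) \to M, \qquad \pi_1(\phi) = \phi(1),
\]
is continuous when $E$ is equipped with the uniform topology, since if $\phi_n \to \phi$ uniformly then in particular $\phi_n(1) \to \phi(1)$ in $M$. By construction $Q^{\ep} = \p^{\ep} \circ \pi_1^{-1}$ is the image measure of $\p^{\ep}$ under $\pi_1$.

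First I would check (or recall from the references \cite{Azencott}, \cite{Dembo-Zeitouni}, \cite{DS} cited in the paper) that the rate function $I$ of Theorem \ref{Azencott} is in fact a \emph{good} rate function, i.e. that its sub-level sets $\{\phi \in E : I(\phi) \le c\}$ are compact in $E$ for every $c \ge 0$. This is standard in the Azencott setting and follows from compactness of the corresponding sub-level sets of $\frac{1}{2}\|h\|_{\h}^2$ in the weak topology on $\h$ together with the continuity of the Stratonovich solution map $\Phi_{x_0}: \h \to E$ on those level sets.

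Next I would invoke the contraction principle: if $\p^{\ep}$ satisfies an LDP on $E$ at rate $\ep^{-2}$ with good rate function $I$, and $\pi_1: E \to M$ is continuous, then $Q^{\ep} = \p^{\ep}\circ \pi_1^{-1}$ satisfies an LDP on $M$ at the same rate with rate function
\[
J(y) = \inf\{I(\phi) : \phi \in E,\ \pi_1(\phi) = y\} = \inf\{I(\phi) : \phi(1) = y\}.
\]
Substituting the variational formula \eqref{ratefunction} for $I$ and interchanging the two infima yields
\[
J(y) = \inf\left\{ \tfrac{1}{2}\|h\|_{\h}^2 : h \in \h,\ \Phi_{x_0}(h)(1) = y \right\},
\]
which is exactly the expression in the statement of Theorem \ref{Azencott2}.

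There is no genuine obstacle here: the argument is a textbook contraction-principle deduction, and the only nontrivial ingredient is the goodness of $I$, which is part of (or an immediate strengthening of) Theorem \ref{Azencott} itself. The interchange of infima in the last step is purely a matter of unfolding definitions.
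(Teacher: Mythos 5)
Your proof is correct and matches the route the paper has in mind: the paper itself gives no separate proof of Theorem \ref{Azencott2}, merely noting that it ``obviously'' follows from Theorem \ref{Azencott}, and the intended mechanism is exactly the contraction principle applied to the continuous endpoint map $\pi_1$, with goodness of $I$ supplied by the Azencott setup.
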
 

It is well known that if the strong H\"ormander's condition is satisfied, i.e. if the Lie algebra generated by the vector fields $X_1,\dots, X_m$ is of full rank at every point $x_0 \in M$, then for any $y \in M$,  there is a finite energy horizontal path joining the starting point $x_0$ and $y$, so that the rate function $J(y)$ is finite for every $y \in M$. In fact $J(y)$ is then simply the sub-Riemannian (or Carnot-Carath\'eodory) distance between the initial point $x_0$ and $y$.
In this case the classical Large Deviation Principle given above provides the right order of magnitude for the probability $Q^{\ep} (B)$ for a Borel subset $B$ of $M$, if $B$ has a non empty interior.  But, if this Strong H\"ormander's condition is not true, very rare events may exist, where the classical Large Deviation Principle does not give the right order of magnitude, even when the weak H\"ormander's condition is satisfied, and thus even when a smooth heat kernel exists. This will naturally show that the classical logarithmic asymptotics for the heat kernel in short time cannot be valid, along the lines of the well known results due to Varadhan (\cite{var})  for the elliptic case, and L\'eandre (\cite{Leandre1}, \cite{Leandre2}) for the hypo-elliptic case, under the Strong H\"ormander's condition.

We begin with a very simple example, where the weak H\"ormander's condition is satisfied. This dispels the idea that these very rare events should be rather pathological.

\begin{example}\label{ex-kol}

Let us start here with the simplest possible example. 
Consider the so-called Kolmogorov diffusion $x(t)$ on $\R^2$ generated by the operator $L= \frac12 X_1^2 + X_0$, where the vector fields $X_0$ and $X_1$ are given by  $X_0=x^1 \frac {\partial}{\partial x^2}$ and $X_1=\frac{\partial}{\partial x^1}$. 
We fix here the initial condition $x(0)=0$. This diffusion is given by the solution of the stochastic differential equation
\begin{equation}
dx(t)=X_1(x(t))\circ dw_t+X_0(x(t))dt, \quad x(0)=0.
\end{equation}
Obviously the solution of this SDE is explicit and is given by the Gaussian process
\begin{equation}
x(t) = \left(w_t, \int_0^t w_sds\right).
\end{equation}

Consider the sets $B_1= \{ (x^1,x^2) \in \R^2, x^1 > 1 \}$ and $B_2= \{ (x^1,x^2) \in \R^2, x^2 > 1 \}$. 
Then obviously the LDP given above gives the right order of magnitude for the probability $\p ( x^{\ep}(1) \in B_1)$ but not for $\p ( x^{\ep}(1) \in B_2)$. Indeed $B_2$ is clearly an open and non-horizontally accessible set.  
The Large Deviation Principle given above only tells us that

\begin{equation}
\lim_{\ep\to0}\ep^{2}\log Q^{\ep}(B_2) = -\infty.
\end{equation}
We will see below that it is easy to compute a much better estimate for the probability $Q^{\ep}(B_2)$ of this very rare event. Indeed it is obvious here to compute the heat kernel, i.e. the density of this Gaussian process.
In this simple case, the right order of magnitude 
is given by
\begin{equation}\label{eq-kol-Q-B-2}
\lim_{\ep\to0}\ep^6\log Q^{\ep}(B_2) =\lim_{\ep\to0}\ep^6\log\p\left(\ep^3\int_0^1w_sds>1\right)=-\frac32.
\end{equation}

\end{example}

The goal of our paper is to show that such events exists in much more general contexts, and to study the order of magnitude of their probability. In this simple Example \ref{ex-kol},  we have (at least) two different powers of $\ep$ as rates for short time large deviations, i.e. $\ep^{-2}$ and $\ep^{-6}$, for different types of events.  We want to understand this phenomenon in greater generality.

Before discussing this generalization, it might be useful to discuss here a natural guess for a way to estimate the probability of these non-horizontally accessible events.  We first recall the classical Stroock-Varadhan support theorem, and then dwell more on our simple example. 

Define the functional $\Psi^{\ep}_{x_0}$ on the Cameron Martin Space $\h$ by the following differential equation. For $x_0 \in  M$ and $h \in \h$, let $\psi =\Psi^{\ep}_{x_0}(h)$ be the solution to 
\begin{equation}\label{odewithX0}
d\psi(t) =\ep \sum_{i=1}^mX_i(\psi(t))\circ dh_t^i + \ep^2 X_0(\psi(t))dt, \quad \psi(0)=x_0.
\end{equation}

The Stroock-Varadhan support theorem (\cite{SV}) says that the support of the distribution $\p^{\ep}$ is given by the closure in $E$ of the image 
$\Psi^{\ep}_{x_0}(\h)$. It is then tempting to guess that the order of magnitude of $\log \p^{\ep}(A)$ for an event $A \subset E$ is rather given by the infimum of $I^{\ep}$ than the infimum of $I$, where
\begin{equation}\label{ratewithdrift2}
I^{\ep}(\psi) = \inf \left(\frac{1}{2} \|h\|_{\h}^2, \Psi^{\ep}_{x_0}(h) = \psi\right).
\end{equation}

In the simple context of the Kolmogorov diffusion example above, it is indeed true that (cf Section \ref{sec-variation-kol})
\begin{equation}\label{claim1}
\lim_{\ep\to0} \frac{\log \p^{\ep}(A)}{\inf (I^{\ep}(\psi), \psi \in A)} =-1
\end{equation}
for both the sets $A_1= \{ \psi, \psi(1) \in B_1 \}$ and $A_2= \{ \psi, \psi(1) \in B_2 \}$.
But this fact is not always true. We will see that, still in the very simple case of the Kolmogorov diffusion, there exists a (rather pathological) set $A \subset E$, such that this guess is not correct (see Section \ref{sec-bad-set}). It would be interesting to characterize the sets for which this estimate is true. 

Our main result will not follow this route but rather use a very important characteristic of our simple example, which is that the Lie algebra generated by the vector fields driving the equation is nilpotent. Our main result generalizes this ``graded" behavior to the general case where the Lie algebra $\mathfrak{L}$ generated by the vector fields driving the equation \eqref{eq-sde} is nilpotent. 

We first introduce a simple definition.
\begin{definition} 
We call a Borel set $A \subset E$ to be of grade $\alpha$ for the probability measure $\p^{\ep}$ if
\[
-\infty<\liminf_{\ep\to0}\ep^{2\alpha}\log\p^{\ep}(A)\le\limsup_{\ep\to0}\ep^{2\alpha}\log\p^{\ep}(A)<0.
\]

\end{definition}

We now give our first general result.

\begin{theorem}\label{mainresult1}
Assume that the vector fields $X_0,\dots,X_m$ are complete and generate a nilpotent Lie algebra $\mathfrak{L}$.
There exist positive (rational) numbers $1=\alpha_1<\cdots<\alpha_\ell<\infty$, such that for each $1\le k\le\ell$, there exist Borel subsets $A_k$ of grade $\alpha_k$.

\end{theorem}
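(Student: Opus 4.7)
I would organize the proof around the weighted grading on $\fL$ obtained by assigning weight $1$ to each $X_i$ ($i\ge 1$) and weight $2$ to $X_0$; this matches the scaling of the generator $\ep^2 L$, in which the Stratonovich term contributes at scale $\ep$ and the drift at scale $\ep^2$. Because $\fL$ is nilpotent, only finitely many bracket weights appear, producing a finite filtration
\beq
\{0\}=\fL_0 \subset \fL_1\subset \fL_2\subset \cdots\subset \fL_r=\fL,
\eeq
where $\fL_j$ is spanned by iterated brackets of total weight at most $j$. Evaluated at $x_0$ this yields a flag $V_1\subset V_2\subset\cdots\subset V_r$ in $T_{x_0}M$, and I would let $1=\alpha_1<\alpha_2<\cdots<\alpha_\ell$ be the finite sequence of weights at which the flag strictly grows. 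These are positive integers (hence rationals) and they will be the grades claimed in the statement.

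\textbf{Construction of the sets.} For each $\alpha_k$ I would pick a vector $v_k\in V_{\alpha_k}\setminus V_{\alpha_k-1}$ and, using privileged exponential coordinates adapted to the filtration (for instance those of Rothschild-Stein), a smooth real-valued function $y_k$ defined near $x_0$, homogeneous of weight exactly $\alpha_k$ under the canonical Carnot dilation $\delta_\ep$, with $dy_k(x_0)(v_k)\ne 0$. The candidate set is then
\beq
A_k=\crl{\phi\in E\colon y_k(\phi(1))>1}.
\eeq
For $k=1$ the set $A_1$ contains a horizontally accessible open neighborhood, so Theorem \ref{Azencott2} immediately gives grade $1$.

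\textbf{Proof of grade $\alpha_k$ for $k\ge 2$.} Using the nilpotency of $\fL$ and the completeness of the vector fields, I would apply Chen's series / Strichartz's formula to write $x^\ep(1)=\Exp(\Xi_\ep)\cdot x_0$, where $\Xi_\ep\in\fL$ is a finite linear combination of iterated Stratonovich integrals in the $w^i$ and $t$, each weighted by the bracket weight of its coefficient. Pulling back by $\delta_\ep$ one obtains $y_k(x^\ep(1))=\ep^{\alpha_k}F_k(w,t)+R_\ep$, with $F_k$ a fixed element of the inhomogeneous Wiener chaos of total weight $\alpha_k$ and $R_\ep=o(\ep^{\alpha_k})$ in a suitable sense. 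The upper bound $\limsup_{\ep\to 0}\ep^{2\alpha_k}\log\p(x^\ep\in A_k)<0$ follows from standard sub-Gaussian / hypercontractive tail estimates for polynomials in Brownian motion of fixed degree. For the matching lower bound I would produce a single Cameron-Martin path $h\in\h$ whose deterministic iterated integrals make $F_k(h)>1+\eta$, and then use Girsanov shifting $w$ by $\ep^{-\alpha_k} h$ to obtain $\liminf_{\ep\to 0}\ep^{2\alpha_k}\log\p(x^\ep\in A_k)\ge -\tfrac{1}{2}\|h\|_\h^2$.

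\textbf{Main obstacle.} The genuinely delicate step is the upper bound. Even with $\fL$ nilpotent, Chen's expansion mixes iterated integrals of every weight up to $\alpha_k$, and one must show that only the weight-$\alpha_k$ chaos $F_k$ governs the rate while all cross and remainder terms are absorbed at the same logarithmic order. Making this rigorous requires simultaneous moment control of all Wiener chaoses of degree $\le\alpha_k$ together with a localization or stopping-time argument at the natural scale $\ep^{\alpha_k}$; this is where the completeness of the $X_i$ and the algebraic identification with the graded nilpotent model are used in an essential way.
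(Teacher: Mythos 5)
There is a genuine gap in the choice of grading, and it undermines the whole argument for $k\ge 2$. You assign to a bracket $X^J$ (with $n(J)$ letters among $X_1,\dots,X_m$ and $p(J)$ occurrences of $X_0$) the weight $\|J\|=n(J)+2p(J)$, build the flag from these weights, and take the resulting integers as the grades. But the definition of ``grade'' asks for $\ep^{2\alpha}\log\p^\ep(A_k)$ to have a finite nonzero limit, and the exponent that makes this happen is $\alpha(J)=\|J\|/n(J)$, not $\|J\|$. The point is precisely your scaling identity $y_k(x^\ep(1))\approx\ep^{\|J\|}F_k(w)$: since $F_k$ lies in the Wiener chaos of \emph{Gaussian} degree $n(J)$, its tail decays like $\exp(-c\,t^{2/n(J)})$, so $\p(\ep^{\|J\|}F_k>1)\approx\exp(-c\,\ep^{-2\|J\|/n(J)})=\exp(-c\,\ep^{-2\alpha(J)})$. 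Your claimed rate $\ep^{2\|J\|}\log\p(x^\ep\in A_k)$ therefore tends to $0$, not to a negative constant, whenever $n(J)\ge 2$. The Girsanov lower bound has the same defect: shifting $w\mapsto w+\ep^{-\|J\|}h$ costs $\tfrac12\ep^{-2\|J\|}\|h\|_\h^2$, but the constraint $\ep^{\|J\|}F_k(\ep^{-\|J\|}h)=\ep^{\|J\|(1-n(J))}F_k(h)>1$ becomes vacuous as $\ep\to 0$ for $n(J)\ge 2$, so the infimum of $\|h\|_\h^2$ degenerates to $0$. To make both bounds bite at the same scale you must instead shift by $\ep^{-\alpha(J)}h$; that yields the constraint $F_k(h)>1$ and the matching cost $\tfrac12\ep^{-2\alpha(J)}\|h\|_\h^2$. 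This also shows the claim that the grades are integers cannot be correct: $\alpha(J)=1+2p(J)/n(J)$ is a genuinely non-integer rational as soon as $n(J)\nmid 2p(J)$, and the two orderings (by $\|J\|$ versus by $\alpha(J)$) actually disagree --- for example $J_1=(1,0)$ and $J_2=(1,1,0)$ have $\|J_1\|=3<\|J_2\|=4$ but $\alpha(J_1)=3>\alpha(J_2)=2$, so your flag and the paper's flag are different subspace filtrations, not just differently labelled.

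For comparison, the paper builds the flag $W(\alpha)=\Span\{X^J:\alpha(J)\le\alpha\}$ directly from the $\alpha$-index, obtains the grades as the jump points of $\dim W(\alpha)$, and deduces Theorem \ref{mainresult1} as an immediate corollary of the full graded LDP (Theorem \ref{mainresult3}) by taking $B_k=\exp_{x_0}(C_k)$ with $C_k$ the set of paths whose endpoint has a component of size $>1$ transverse to $W(\alpha_{k-1})$ inside $W(\alpha_k)$. Your idea of combining Chen's series, hypercontractive chaos tails, and a Girsanov shift is a reasonable alternative to invoking the full graded LDP and could in principle be made to work --- but only after replacing the weight $\|J\|$ everywhere by the $\alpha$-index $\alpha(J)=\|J\|/n(J)$, rebuilding the flag with the correct ordering, and rescaling the Girsanov shift by $\ep^{-\alpha(J)}$ rather than $\ep^{-\|J\|}$.
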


We will prove this in Section \ref{sec-general}. In fact we will prove there our main result Theorem \ref{mainresult3}, which is sharper and more quantitative, since it gives a sufficient condition to check if an event is of grade $\alpha_k$. This condition is algebraic in nature and rather complex. It will imply in particular that

\begin{theorem}\label{mainresult2}
Under the assumptions of Theorem \ref{mainresult1}, the sets $A_k$ can be chosen to depend only on the final point, i.e. there exists  sets $B_k \subset M$ such that $A_k = \{ \phi \in E, \phi(1) \in B_k \}$.

\end{theorem}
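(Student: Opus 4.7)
The plan is to deduce Theorem \ref{mainresult2} from the sharper Theorem \ref{mainresult3} by producing, at each grade $\alpha_k$, an explicit set $B_k\subset M$ for which the algebraic sufficient condition of that theorem holds. The starting observation is that when $\mathfrak{L}$ is nilpotent, the solution $x^{\ep}(1)$ of \eqref{eq-sde} admits a finite Chen--Strichartz type expansion whose summands are iterated brackets of $X_0,X_1,\dots,X_m$; a bracket involving $X_0$ with multiplicity $a$ and the diffusion fields with total multiplicity $b$ carries the factor $\ep^{2a+b}$, the drift being weight $2$ and each Brownian being weight $1$. This defines a weight grading of $\mathfrak{L}$, and hence a natural graded system of privileged coordinates centered at $x_0$.

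Concretely, I would fix a Hall basis of $\mathfrak{L}$ adapted to this weight filtration and use exponential coordinates of the second kind to identify a neighborhood of $x_0$ in $M$ with $\R^{n_1}\oplus\cdots\oplus\R^{n_r}$, where $\R^{n_j}$ records the weight-$j$ directions. The rational numbers $\alpha_1<\cdots<\alpha_\ell$ produced by Theorem \ref{mainresult1} correspond precisely to those weights $j$ along which the nilpotent approximation of $x^{\ep}(1)$ is non-degenerate. For each such weight, choose a smooth function $\lambda\colon M\to\R$ which, in the above coordinates, equals a pure weight-$j$ coordinate and vanishes on all coordinates of weight $<j$. Set
\[
B_k \;=\; \{\, y \in M : \lambda(y) > 1 \,\}, \qquad A_k \;=\; \{\, \phi \in E : \phi(1) \in B_k \,\}.
\]
By construction each $A_k$ depends only on the terminal point.

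To check that $A_k$ has grade $\alpha_k$ I would feed this $\lambda$ into Theorem \ref{mainresult3}. The upper bound on $\ep^{2\alpha_k}\log\p^{\ep}(A_k)$ follows from the algebraic condition of that theorem: since $\lambda$ kills every coordinate of weight strictly less than $j$, no Cameron--Martin control of cost smaller than $\ep^{-2\alpha_k}$ can steer the nilpotent approximation into $B_k$. The matching lower bound is produced by an explicit control, chosen so that its nilpotent rescaling excites exactly the weight-$j$ bracket picked out by $\lambda$, with $\tfrac12\|h\|_{\h}^2 = O(\ep^{-2\alpha_k})$ and $\lambda(\Psi^{\ep}_{x_0}(h)(1))>1$; this is the general analogue of the explicit Kolmogorov computation \eqref{eq-kol-Q-B-2}.

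The main obstacle I expect is precisely the matching of grades: one must ensure that the chosen $\lambda$ is not reachable by a \emph{cheaper} strategy that assembles cross-brackets of several lower weights into a weight-$j$ displacement. This is what the algebraic hypothesis of Theorem \ref{mainresult3} is engineered to rule out, and the finiteness of the Chen--Strichartz expansion (guaranteed by nilpotency) reduces the verification to a finite linear-algebra check on the weighted components of $\lambda$ against the graded pieces of $\mathfrak{L}$.
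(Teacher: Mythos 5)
Your overall strategy — choose a function $\lambda$ that extracts a bracket direction newly reachable at grade $\alpha_k$, vanishing on everything ``cheaper,'' and set $B_k=\{\lambda>1\}$ — is the same strategy the paper uses. The paper takes $B_k=\exp_{x_0}(C_k)$ where $C_k$ asks the component of a point in $W(\alpha_k)=\Span\{X^J:\alpha(J)\le\alpha_k\}$ lying \emph{outside} $W(\alpha_{k-1})$ to be larger than $1$; this is exactly your ``pick a pure coordinate, kill the lower strata'' idea.

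However, there is a genuine gap in how you identify the grading. You filter $\mathfrak{L}$ by the \emph{homogeneous weight} $\|J\|=2p(J)+n(J)$ (drift has weight $2$, each Brownian weight $1$) and assert that the grades $\alpha_k$ of Theorem \ref{mainresult1} ``correspond precisely to those weights $j$.'' They do not. The grade attached to a bracket $X^J$ is the $\alpha$-\emph{index}
\[
\alpha(J)\;=\;\frac{\|J\|}{n(J)}\;=\;1+\frac{2p(J)}{n(J)},
\]
and the paper's first flag $W(\alpha_1)\subsetneq\cdots\subsetneq W(\alpha_\ell)$ is built from $\alpha(J)$, not from $\|J\|$. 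The two filtrations disagree as soon as $n(J)>1$: for instance a horizontal bracket $[X_1,X_2]$ (no $X_0$, $J=(1,2)$) has weight $\|J\|=2$ but $\alpha(J)=1$, so it sits in the first grade and is horizontally accessible — a set $\{\lambda>1\}$ built from the pure ``weight-$2$'' coordinate along $[X_1,X_2]$ would have grade $\alpha_1=1$, not $2$. Conversely a bracket like $[[X_1,X_2],X_0]$ has weight $4$ but $\alpha$-index $2$, so the weight ordering can even invert the grade ordering relative to $[X_1,X_0]$, which has weight $3$ and $\alpha$-index $3$. The Kolmogorov example masks this because there every nonzero bracket has $n(J)=1$, so $\|J\|=\alpha(J)$ coincidentally. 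Also note the $\alpha_k$ are genuinely rational (not integral), which already shows they cannot be the integer-valued weights $\|J\|$. Your construction can be repaired simply by replacing the weight filtration with the $\alpha$-index filtration and requiring $\lambda$ to vanish on \emph{all} of $W(\alpha_{k-1})$, i.e.\ on every $X^{J'}$ with $\alpha(J')\le\alpha_{k-1}$ — which is exactly what the paper's choice of $C_k$ accomplishes.
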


The nilpotence assumption may seem too restrictive, and it probably is, for understanding the general phenomenon of ``very rare events". But this assumption is important to get the result above about the different grades being powers of $\ep$. Indeed it is easy to see that, even though very rare events do exist, this grading behavior will be not be valid if we consider even the simplest case where the Lie algebra  $\mathfrak{L}$ is not nilpotent but solvable.

\begin{example}\label{ex-solvable}
We look here at a diffusion which can be seen as the solution of the following SDE on $\R^2$
\begin{equation}\label{eq-sde-solv}
dx(t)=X_1(x(t))\circ dw_t+X_0(x(t))dt, \quad x(0)=0,
\end{equation}
where the vector fields $X_0$ and $X_1$ are given by  $X_0=e^{x^1} \frac {\partial}{\partial x^2}$ and $X_1=\frac{\partial}{\partial x^1}$.  The solution of this SDE is also explicit and is given by
\begin{equation}
x(t)=\left(w_t, \int_0^t e^{w_s}ds\right).
\end{equation}
Again, for simplicity let us consider the same event $B_2= \{ (x^1,x^2) \in \R^2, x^2 > 1 \}$. Again here, the event is not horizontally accessible. The computation is a bit more delicate but it is still possible to compute the right order of magnitude of the probability of event (see Section \ref{sec-solv}). 
\begin{equation}
\lim_{\ep\to0}\frac{\ep^2}{\log^2(1/\ep)}\log \p( x^{\ep}(1) \in B_2) =-2
\end{equation}
Here we also have two rates, but they are not polynomial in $\ep$ since one of them is logarithmic. This comes from the fact that the Lie algebra $\mathfrak{L}$  is solvable but not nilpotent. It is natural to assume that a similar lines of results is true for general solvable diffusions.

\end{example}

Assuming nilpotence  of $\fL$, our strategy is to exploit a well known tool for the diffusions $x(t)$, known as the stochastic Taylor formula (see Yamato \cite{YAM}, Castell \cite{CAS}, \cite{GBA}), giving an explicit representation. It shows that the diffusion $x(t)$ can be seen as a smooth and explicit function of the family of all Stratonovich iterated integrals of length at most $r$. 
More precisely, for any integer $k$, and multi-index $J$ of length $|J|=k$ in $\{0,\dots,m\}^k$, say $J = (j_1,\dots, j_k)$, define the iterated Stratonovich integral
\begin{equation}
W^J_t =  \int_{0< t_1< \cdots < t_k<t} dw^{j_1}_{t_1}\circ \cdots \circ dw^{j_k}_{t_k},
\end{equation}
where we use the convention that $w^0_t = t$.
Consider the family of all such iterated integrals of length less than or equal to $r$, and denote it by $Y_t = (W^J_t)_{|J| \leq r}$.
The stochastic Taylor formula shows that there exists a smooth function $F$ such that
\begin{equation}
x(t)= F(x_0, Y_t).
\end{equation}
In fact this function $F$ is very explicit, see \eqref{eq-taylor-F}.  At last in Section \ref{sec-example}, we will focus on the simple examples that are mentioned above, give explicit computations, and compare to the estimates obtained by applying Theorem \ref{mainresult3}.

\section{Graded Large Deviations for a universal nilpotent diffusion}\label{sec-Y}

In this section we consider the ``universal" nilpotent diffusion $Y_t = (Y^J_t)_{|J| \leq r}$ where $Y^J_t:=W^J_t$ on $\R^D$. It is known that $Y_t$ is in fact a solution of a SDE on $\R^D$ (see \cite{YAM}),
\[
dY_t=\sum_{i=0}^m Q_i(Y_t) \circ dw^i_t,\quad |J|\le r,
\]
where in Cartesian coordinates $(y^J, |J|\le r)$ of $\R^D$ we have explicitly $Q^J_i$, $J=(j_1,\dots, j_k)$ given by
\[
Q^{(j_1,\dots, j_k)}_i=
\begin{cases}
& y^{(j_1,\dots, j_{k-1})}\delta^{j_k}_{i}\frac{\partial}{\partial y^J}, \quad k>1 \\
& \delta^{j_k}_i\frac{\partial}{\partial y^J}, \quad k=1
\end{cases},
\]
where $\delta^{j}_i$ denotes the Kronecker delta function of integers $i, j$.
\begin{Remark} In fact it is a bit too large a ``universal" diffusion. It would be more natural to work with the natural diffusion on the free nilpotent algebra with $m+1$ generators and of step $r$, but this would complicate a bit our exposition.
\end{Remark}

We will need a few simple definitions before we can introduce our first graded large deviation theorem for the universal nilpotent diffusion $Y_t$.

\begin{definition}
For any multi-index $J = (j_1,\dots, j_k)$, we denote by $p(J)$ the number of zeros, and $n(J)$ the number of non-zeros in $J=(j_1,\dots, j_k)$. 
We call $\|J\|:=n(J)+2p(J)$ the \emph{size} of $J$. 
When $n(J)$ is not zero,  the $\alpha$-index of $J$ is defined to be
\[
\alpha(J):=1+\frac{2p(J)}{n(J)}=\frac{\|J\|}{n(J)}.
\] 

\end{definition}

We use this definition of the $\alpha$-indices to define a flag of vector subspaces of $\R^D$ corresponding to the $\alpha$-indices.  We first extend naturally the definition by defining $\alpha(J)$ to be infinite if $n(J)=0$.
Denote by $\{e_J, |J|\le r\}$ the basis of  $\R^D$. For any $\alpha>0$, let
 \begin{equation}\label{eq-W-alpha}
 \tilde{W}(\alpha)=\Span \{e_J,\alpha(J)\le\alpha \}.
 \end{equation}
 Clearly $  \tilde{W}(\alpha')\subset   \tilde{W}(\alpha)$ if $\alpha'\le\alpha$. Let
 \begin{equation}\label{eq-d-alpha}
 d(\alpha)=\dim   \tilde{W}(\alpha),
 \end{equation}
then $d:[1,+\infty)\to \mathbb{Z}^+$  is a right continuous increasing step function and $0\le d(\alpha)\le D$. Let $ {\alpha}_j$, $1\le j\le \jmath$ be such that $\lim_{\delta\to 0+}d( {\alpha}_j-\delta)<d( {\alpha}_j)$. 
Then we obtain a \emph{graded structure} $1= {\alpha}_1<\cdots < {\alpha}_\jmath<\infty$ and the corresponding flag 
\begin{align}\label{eq-G-seq}
\tilde{W}({\alpha}_1)\subsetneq\cdots\subsetneq \tilde{W}({\alpha}_{\jmath})\subsetneq \R^D.
\end{align}
We denote by $\Pi^{{\alpha}_k}$ is the natural projection map from $\R^D$ to $\tilde{W}({\alpha}_k)$.

We then define an important family of dilations on $\R^D$.
For any $1\le k\le \jmath$ and any multi-index $J$, with $|J|\le r$, we define

\begin{equation}\label{gamma-index}
\gamma^{{\alpha}_k}(J)=({\alpha}_k\cdot n(J)-\|J\|)_+=(n(J)({\alpha}_k-\alpha(J)))_+.
\end{equation}

\begin{definition}
For any $\eta>0$ , and $1\le k\le \jmath$, we define the dilation map $T_\eta^{{\alpha}_k}$ on $\R^D$ by
 \begin{equation}
T^{{\alpha}_k}_\eta(v)=(\eta^{\gamma^{{\alpha}_k}(J)}v_J)_{|J|\le r},
 \end{equation}
for $v\in \R^D$.
\end{definition} 

Our first result is a Large Deviation Principle for the dilated processes $Y^{\ep, k}:=T_\ep^{{\alpha}_k}(Y^\ep)$, for each 
$1\le k\le \jmath$.

\begin{theorem}\label{thm-ldp-Y}
For each $1\le k\le \jmath$, the distribution of dilated process $Y^{\ep, k}:=T_\ep^{{\alpha}_k}(Y^\ep)$ satisfies a Large Deviation Principle at rate $\ep^{-2{\alpha}_k}$ with rate function 
\begin{equation}\label{eq-rate-Y-k}
I^{{\alpha}_k}(\varphi):=\inf\left(\frac12\|h\|^2_{\h}, \Phi_0^{{\alpha}_k}(h)=\varphi\right),
\end{equation}
where $\Phi_0^{{\alpha}_k}(h):=\Pi^{{\alpha}_k}\circ {\Phi}^Y_0(h)$. $\Pi^{{\alpha}_k}$ is the projection map from $\R^D$ to $\tilde{W}({\alpha}_k)$, and  $\Phi_0^Y(h)=:\phi$ is the solution of the ODE on $\R^D$:
\[
d\phi_t=\sum_{i=1}^m Q_i(\phi_t) dh^i_t, 
\quad \phi_0=0.
\]
\end{theorem}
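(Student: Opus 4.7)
The plan is to reduce the claim to Schilder's theorem via a Brownian rescaling, then transfer the LDP through the iterated-integral map by an Azencott-type contraction argument, while disposing of the coordinates with $\alpha(J)>\alpha_k$ as super-exponentially negligible. Introduce the rescaled driving noise $\tilde{w}^{i,\ep}_t:=\ep^{\alpha_k}w^i_t$ for $i=1,\ldots,m$ (keeping $\tilde{w}^{0,\ep}_t=t$), and let $\tilde{W}^{J,\ep}_t$ denote its associated Stratonovich iterated integrals, defined exactly as $W^J$ but with $w$ replaced by $\tilde{w}^{\ep}$. By Schilder's theorem, the law of $\tilde{w}^{\ep}$ on the Wiener space satisfies an LDP at rate $\ep^{-2\alpha_k}$ with rate function $\tfrac12\|h\|^2_{\h}$. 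Using the Brownian scaling $W^J_{\ep^2 t}\stackrel{d}{=}\ep^{\|J\|}W^J_t$ (jointly in $J$) together with the definition of $\gamma^{\alpha_k}(J)$, one checks the joint distributional identity
\[
Y^{\ep,k,J}_t\stackrel{d}{=}\ep^{d(J)}\,\tilde{W}^{J,\ep}_t,\qquad d(J):=\gamma^{\alpha_k}(J)+\|J\|-\alpha_k n(J),
\]
with $d(J)=0$ exactly when $n(J)\ge 1$ and $\alpha(J)\le\alpha_k$, and $d(J)>0$ otherwise. This cleanly separates the ``principal'' coordinates indexing $\tilde{W}(\alpha_k)$, which will carry the LDP, from the ``higher'' ones, which are damped by an extra positive power of $\ep$.

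For the principal coordinates, I would transfer Schilder's LDP to $(\tilde{W}^{J,\ep})_{\alpha(J)\le\alpha_k}$ by the classical Azencott-type contraction argument used in the proof of Theorem~\ref{Azencott}: approximate $\tilde{w}^{\ep}$ by polygonal interpolations, on which the iterated-integral map is continuous in the uniform topology and coincides with the deterministic ODE defining $\Phi_0^Y$; then verify exponential equivalence between the polygonal and the Stratonovich iterated integrals, uniformly in $\ep$, so the contraction principle applies. The induced rate function on these coordinates is exactly $\inf\{\tfrac12\|h\|^2_\h:\Pi^{\alpha_k}\Phi_0^Y(h)=\varphi\}=I^{\alpha_k}(\varphi)$.

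For the remaining coordinates $\alpha(J)>\alpha_k$ with $n(J)\ge1$, the Gaussian-chaos tail bound $\p(|W^J_1|>M)\le\exp(-c\,M^{2/n(J)})$ combined with $Y^{\ep,k,J}_t\stackrel{d}{=}\ep^{\|J\|}W^J_t$ (since $\gamma^{\alpha_k}(J)=0$ in this case) yields
\[
\p(|Y^{\ep,k,J}_t|>\delta)\le\exp\bigl(-c\,\delta^{2/n(J)}\,\ep^{-2\alpha(J)}\bigr),
\]
which is super-exponentially small relative to the rate $\ep^{-2\alpha_k}$ because $\alpha(J)>\alpha_k$; the purely deterministic case $n(J)=0$ is immediate. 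These coordinates therefore impose only the constraint of vanishing in the limit, so the LDP for the full dilated process $Y^{\ep,k}$ at rate $\ep^{-2\alpha_k}$ holds with rate function $I^{\alpha_k}$. The main technical obstacle is precisely the contraction step for the principal coordinates, since the iterated-integral map is not uniformly continuous on Wiener space; the remedy is the standard Wong--Zakai / polygonal-approximation machinery of Stroock--Varadhan and Azencott, applied here with the small parameter $\ep^{\alpha_k}$ in place of the usual $\ep$.
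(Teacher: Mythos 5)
Your strategy is essentially the same as the paper's: rescale time and use Brownian scaling to reduce to an LDP at rate $\ep^{-2\alpha_k}$ for the coordinates with $\alpha(J)\le\alpha_k$, while the coordinates with $\alpha(J)>\alpha_k$ are shown to be $\alpha_k$-exponentially negligible. Your scaling identity $Y^{\ep,k,J}\stackrel{d}{=}\ep^{d(J)}\tilde W^{J,\ep}$ with $d(J)=\gamma^{\alpha_k}(J)+\|J\|-\alpha_k n(J)$ is exactly the decomposition the paper encodes by introducing the auxiliary process $Z^{\ep,k}$ and comparing it with $Y^{\ep,k}$. The differences are in how the two halves are handled. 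For the ``principal'' block, the paper observes that $Z^{\ep,k}=\Pi^{\alpha_k}(z^{\ep^{\alpha_k}})$ where $z^{\ep}$ is a genuine small-noise SDE, so it simply invokes the Azencott LDP for $z^{\ep}$ and then applies the contraction principle to the continuous linear map $\Pi^{\alpha_k}$. You instead go back to Schilder's theorem for $\ep^{\alpha_k}w$ and push it through the iterated-integral map via Wong--Zakai/polygonal approximations. This is a more elementary route, but it amounts to re-proving the Azencott LDP for $z^{\ep^{\alpha_k}}$ and therefore requires genuinely more technical work (the exponential equivalence of polygonal and Stratonovich iterated integrals at scale $\ep^{\alpha_k}$); it buys nothing over citing the SDE LDP, which the paper does. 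For the negligible coordinates, the paper re-uses the path-space LDP for $z^{\ep}_J$ at rate $\ep^{-2\alpha(J)}$, which directly controls the sup over $t\in[0,1]$; you instead invoke a Gaussian chaos tail bound $\p(|W^J_1|>M)\le\exp(-cM^{2/n(J)})$, which is correct but only \emph{pointwise} in $t$. To conclude that $Z^{\ep,k}$ is an $\alpha_k$-exponentially good approximation of $Y^{\ep,k}$ in $C([0,1],\R^D)$ you need the $\sup_{t\in[0,1]}$ inside the probability, so a maximal inequality for chaos (Borell--Tsirelson--Ibragimov--Sudakov type for homogeneous chaos, or a re-use of the path-space LDP as the paper does) is needed and should be supplied; as written there is a small gap here. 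Finally, you and the paper use the same shorthand $\Phi_0^Y$ for the map obtained in the contraction step; note that the iterated-integral functional of $h$ retains the $dt$ (drift) integrations for multi-indices with $p(J)>0$, so the ODE one actually inherits from the contraction is $d\phi=\sum_{i=1}^m Q_i(\phi)\,dh^i+Q_0(\phi)\,dt$, consistent with the $c^J(h_t,t)$ formulation used later in Theorem~\ref{thm-ldp-y-kep}; worth making this explicit so the statement is not read as forcing $\varphi^J\equiv0$ whenever $p(J)>0$.
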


\begin{proof}
We begin by a simple but important scaling argument, for which we will need a more flexible notation here for stochastic iterated integrals, singling out the role of the Brownian stochastic integrals versus the deterministic ones. 
For any multi-index $J = (j_1,\dots, j_k) $, we will denote by
\begin{equation}
I_J(W_t, t)= Y^J_t= W^J_t =  \int_{0< t_1< \cdots < t_k<t} dw^{j_1}_{t_1}\circ \cdots \circ dw^{j_k}_{t_k},
\end{equation} 
where we use again the convention that $w^0_t = t$.

The scaling argument at the core of our argument is given in the following obvious lemma. 

\begin{lemma}
For any multi-index $J = (j_1,\dots, j_k) $, the three processes $Y^J_{\ep^2t}$, $\ep^{\|J\|} Y^J_t$ and $I_J(\ep^{\alpha(J)}W_t, t)$ have the same distribution.
\end{lemma}

\begin{proof}
If we rescale time by a factor $\ep^2$, the natural Brownian scaling invariance shows that $Y^J_{\ep^2t}$  has the same distribution  as $\ep^{\|J\|} Y^J_t$.
Indeed $Y^J_{\ep^2t}  =I_J(W_{\ep^2t}, \ep^2t)$ has the same distribution as $  \ep^{\|J\|} I_J(W_t,t)=  \ep^{\|J\|}  Y^J_t$. It is also easy to see that the distribution of $Y^J_{\ep^2 t}$ is the same as the distribution of the stochastic integral $I_J(\ep^{\alpha(J)}W_t, t)$, where we rescale the Brownian integrands by $\ep^{\alpha(J)}$ but do not rescale the time integrands (i.e. $dw^0_t$). Indeed $I_J(\ep^{\alpha(J)}W_t, t)= \ep^{\alpha(J) n(J)} I_J(W_t, t)= \ep^{\|J\|} Y^J(t)$. 
\end{proof}
 
We consider now the process

\[
(Z^{\ep, k})^J=
\begin{cases}
&\ep^{{\alpha}_k\cdot n(J)}W^J_t, \quad \alpha(J)\le {\alpha}_k\\
&0, \qquad \qquad\quad\ \alpha(J)> {\alpha}_k
\end{cases},
\quad |J|\le r.
\]

\begin{lemma}
The process $Z^{\ep, k}$ satisfies a Large Deviation Principle at rate $\ep^{-2{\alpha}_k}$ and rate function
\begin{equation}
I^{{\alpha}_k}(\varphi):=\inf\left(\frac12\|h\|^2_{\h}, \Phi_0^{{\alpha}_k}(h)=\varphi\right),
\end{equation}
where $\Phi_0^{{\alpha}_k}(h):=\Pi^{{\alpha}_k}\circ {\Phi}^Y_0(h)$.
\end{lemma}

\begin{proof}
Consider the process $z^{\ep}$ in $C([0,1],\R^D)$, defined as the solution to the SDE
\[
dz^\ep_t=\ep \sum_{i=1}^m Q_i(z^\ep_t) \circ dw^i_t+ Q_0(z^\ep_t) dt.
\]
With our notations above, we have $z^{\ep}=(z^\ep_J)_{|J|\le r}$ where
\begin{equation}
z_J^{\ep}(t) = I_J( \ep W_t, t).
\end{equation}
So that $z_J^{\ep^{\alpha_k}}(t)$ has the same distribution as $\ep^{\alpha_k n(J)} I_J(W_t, t)$.
This proves that the process $Z^{\ep, k}$ is given by a simple projection:
\begin{equation}
Z^{\ep, k} = \Pi^{\alpha_k}(z^{\ep^{\alpha_k}}_t).
\end{equation}
This allows us  to prove very simply a Large Deviation Principle for $Z^{\ep, k}$.
Indeed, we know that (see \cite{Azencott}), the distribution of the process $z^{\ep}$ satisfies a Large Deviation Principle with rate $\ep^{-2}$ and rate function
\begin{equation}
I_0(\varphi):=\inf\left(\frac12\|h\|^2_{\h}, \Phi^Y_0 (h)=\varphi\right),
\end{equation}
where  $\Phi^Y_0 (h)=\phi$ is the solution of the ODE in $C([0,1],\R^D)$:
\[
d\phi(t)=\sum_{i=1}^m Q_i(\phi(t)) dh^i_t, 
 \quad \phi(0)=0.
\]
A simple contraction principle and the Large Deviation Principle for  $z^\ep$ show that $Z^{\ep, k}$ satisfy a Large Deviation Principle at rate $\ep^{-2{\alpha}_k}$ and rate function
\[
I^{{\alpha}_k}(\varphi):=\inf\left(\frac12\|h\|^2_{\h}, \Phi_0^{{\alpha}_k}(h)=\varphi\right),
\]
where $\Phi_0^{{\alpha}_k}(h):=\Pi^{{\alpha}_k}\circ {\Phi}^Y_0(h)$.
\end{proof}

We now prove the theorem. From the construction of the $\gamma^{{\alpha}_k}$-indices \eqref{gamma-index} we easily see that the $J$-th component of
$Y^{\ep, k}$ is given by
\[
(Y^{\ep, k})^J=
\begin{cases}
&\ep^{{\alpha}_k\cdot n(J)}W^J_t, \quad \alpha(J)\le {\alpha}_k\\
&\ep^{\alpha(J)\cdot n(J)}W^J_t, \quad \alpha(J)> {\alpha}_k.
\end{cases}
\]
We thus will use $Z^{\ep, k}$ to approximate $Y^{\ep, k}$, for each $1\le k\le \jmath$.

\begin{lemma}\label{lemma-z}
The process $Z^{\ep,k}$ is an ${\alpha}_k$- exponentially good approximation of $Y^{\ep,k}$, i.e. 
\begin{equation}
\limsup_{\ep\to0}\ep^{2\alpha_k}\log \p(\|Z^{\ep,k}- Y^{\ep,k}\|_{[0,1], \infty} >\delta)=-\infty.
\end{equation}
\end{lemma}

\begin{proof}
We first note that the Large Deviation Principle for the process $z_J^{\ep}(t) = I_J( \ep W_t, t)$  shows that there exists a constant $C_\delta > 0 $ such that
\[
\limsup_{\ep\to0}\ep^{2\alpha(J)}\log\p(\| I_J( \ep^{\alpha(J)} W_t, t) \|_{[0,1], \infty} >\delta)\le -C_\delta.
\]
So that, for any $\alpha < \alpha(J)$
\[
\limsup_{\ep\to0}\ep^{2\alpha}\log \p(\| I_J( \ep^{\alpha(J)} W_t, t) \|_{[0,1], \infty}>\delta) = - \infty.
\]
We now remark that 
\begin{align*}
Y^{\ep,k}-Z^{\ep,k}=(\ep^{\alpha(J)\cdot n(J)}W^J_t)_{ \alpha(J)> {\alpha}_k, |J|\le r},
\end{align*} 
which proves that
\begin{equation}
\limsup_{\ep\to0}\ep^{2\alpha_k}\log \p(\|Z^{\ep,k}- Y^{\ep,k}\|_{[0,1], \infty} >\delta)=-\infty,
\end{equation}
and concludes the proof of the lemma. 
\end{proof}
The fact that $Z^{\ep,k}$ satisfies a Large Deviation Principle at rate $\ep^{-2\alpha_k}$ and this approximation result show that $Y^{\ep,k}$ satisfies a Large Deviation Principle with the same rate and same rate function, which closes the proof of the theorem.
\end{proof}

\begin{Remark}
Note that the statement of Theorem \ref{thm-ldp-Y} for the grade $\alpha_1$ is a direct consequence of the classical LDP (Theorem \ref{Azencott}) applied to $Y^\ep$. Indeed $Y^{\ep,1}=\Pi^{{\alpha}_1}(Y^\ep)$ and the contraction principle together with Theorem \ref{Azencott} imply that $Y^{\ep,1}$ satisfies a LDP with rate $\ep^2$, and rate function
$I^{{\alpha}_1}(\varphi):=\inf(I(\psi),\Pi^{{\alpha}_1}(\psi)=\varphi)$. 

It is easy to see that $I^{{\alpha}_1}(\varphi):=\inf(\frac12\|h\|^2_{\h},\Pi^{{\alpha}_1}\circ {\Phi}^Y_0(h)=\varphi)$, which is exactly the rate function of Theorem \ref{thm-ldp-Y} for $\alpha_1=1$, since  $T^{{\alpha}_1}_\eta=\mathrm{Id}$ and $Y^{\ep,1}=\Pi^{{\alpha}_1}(Y^\ep)$.
\end{Remark}

We are now ready to give large deviation estimates for the distribution of $Y^\ep$ itself. In order to state our result we need to introduce new notions of \emph{closed (open) $\alpha$-dilation of $A$} for any Borel set $A\subset{C([0,1],\R^D)}$.

\begin{definition}
For any measurable sets $A\subset C([0,1],\R^D)$ (or $\R^D$) and $\alpha>0$, we call
\begin{equation}\label{eq-univ-cl-inte}
\cl^{\alpha}(A)=\cap_{\delta>0}\overline{\cup_{\eta\le\delta}\,T_\eta^{\alpha}(A)}, \quad 
\mbox{and}\quad
\inte^{\alpha}(A)=\cup_{\delta>0}\accentset{\circ}{\arc{\cap_{\eta<\delta}\,T^{\alpha}_{\eta}(A)}}
\end{equation}
the closed $\alpha$-dilation of $A$ and the open $\alpha$-dilation of $A$.
\end{definition}

We now state our graded large deviation estimates for $Y^\ep$.
\begin{theorem}\label{thm-ldp-Y-main}
There exists an integer $\jmath \ge 1$, rational numbers $1={\alpha}_1<\cdots<{\alpha}_\jmath$, such that 
for any Borel set $A \subset C([0,1],\R^D)$, we have

\begin{equation}\label{eq-univ-gr-1}
\liminf_{\ep\to0}\ep^{2{{\alpha}_k}}\log\p(Y^\ep\in A)\ge -\inf\left(I^{{\alpha}_k}(\varphi),\varphi\in \inte^{{\alpha}_k}(A)\right)
\end{equation}
and
\begin{equation}\label{eq-univ-gr-2}
\limsup_{\ep\to0}\ep^{2{{\alpha}_k}}\log\p(Y^\ep\in A)\le -\inf\left(I^{{\alpha}_k}(\varphi),\varphi\in \cl^{{\alpha}_k}(A)\right).
\end{equation}
\end{theorem}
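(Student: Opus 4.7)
The plan is to exploit the scaling identity $\p(Y^\ep\in A)=\p(Y^{\ep,k}\in T^{\alpha_k}_\ep(A))$, valid because for $\eta>0$ the dilation $T^{\alpha_k}_\eta$ is a bijection of $\R^D$ (extended pointwise to paths), and then compare the $\ep$-dependent target set $T^{\alpha_k}_\ep(A)$ with $\ep$-independent open/closed approximants for which the LDP of Theorem \ref{thm-ldp-Y} applies directly. The definitions of $\inte^{\alpha_k}(A)$ and $\cl^{\alpha_k}(A)$ are tailored precisely so that, as $\ep\to 0$, the set $T^{\alpha_k}_\ep(A)$ lies between an increasing family of open sets that exhaust $\inte^{\alpha_k}(A)$ and a decreasing family of closed sets whose intersection is $\cl^{\alpha_k}(A)$.

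For the lower bound I would fix $\delta>0$ and introduce $U_\delta:=\mathrm{Int}\bigl(\bigcap_{\eta<\delta}T^{\alpha_k}_\eta(A)\bigr)$, which is open and satisfies $U_\delta\subset T^{\alpha_k}_\ep(A)$ for every $\ep<\delta$. Hence $\p(Y^\ep\in A)\ge\p(Y^{\ep,k}\in U_\delta)$, and the LDP lower bound of Theorem \ref{thm-ldp-Y} applied to the open set $U_\delta$ gives $\liminf_{\ep\to 0}\ep^{2\alpha_k}\log\p(Y^\ep\in A)\ge -\inf_{U_\delta}I^{\alpha_k}$. Since $\inte^{\alpha_k}(A)=\bigcup_{\delta>0}U_\delta$, taking the supremum over $\delta>0$ of the right-hand side turns this into \eqref{eq-univ-gr-1}.

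For the upper bound I would fix $\delta>0$ and set $F_\delta:=\overline{\bigcup_{\eta\le\delta}T^{\alpha_k}_\eta(A)}$, which is closed and contains $T^{\alpha_k}_\ep(A)$ for all $\ep\le\delta$. The LDP upper bound of Theorem \ref{thm-ldp-Y} applied to the closed set $F_\delta$ then gives $\limsup_{\ep\to 0}\ep^{2\alpha_k}\log\p(Y^\ep\in A)\le -\inf_{F_\delta}I^{\alpha_k}$. The remaining step, and the real content of the argument, is to pass from $F_\delta$ to $\cl^{\alpha_k}(A)=\bigcap_{\delta>0}F_\delta$. For this I would use that $I^{\alpha_k}$ is a good rate function: it is the image under the continuous map $\Pi^{\alpha_k}\circ\Phi_0^Y$ of the classical Cameron--Martin rate function on $\h$, whose level sets are compact, so each set $\{I^{\alpha_k}\le a\}$ is compact. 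For any $a<\inf_{\cl^{\alpha_k}(A)}I^{\alpha_k}$ the compact set $\{I^{\alpha_k}\le a\}$ is disjoint from $\bigcap_\delta F_\delta$, so by the finite intersection property some $F_{\delta_0}$ is already disjoint from it, giving $\inf_{F_{\delta_0}}I^{\alpha_k}>a$. Letting $a\nearrow\inf_{\cl^{\alpha_k}(A)}I^{\alpha_k}$ yields \eqref{eq-univ-gr-2}.

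The main obstacle is precisely this last compactness step, which is needed to commute ``$\inf$'' with ``$\bigcap_{\delta>0}$'' in the upper bound; without goodness of $I^{\alpha_k}$, the above would only give the weaker $\limsup\le -\sup_{\delta>0}\inf_{F_\delta}I^{\alpha_k}$. The grades $1=\alpha_1<\cdots<\alpha_\jmath$ and the integer $\jmath$ are exactly those produced by the flag \eqref{eq-G-seq}, so the argument applies verbatim at each rate $\ep^{-2\alpha_k}$.
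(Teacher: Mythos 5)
Your proof is correct and follows essentially the same route as the paper: sandwich $T^{\alpha_k}_\ep(A)$ between the $\ep$-independent sets $U_\delta\subset T^{\alpha_k}_\ep(A)\subset F_\delta$, apply Theorem~\ref{thm-ldp-Y}, and pass to the limit in $\delta$. The only cosmetic difference is in the upper-bound limit step, where you invoke goodness of $I^{\alpha_k}$ via a finite-intersection argument on a sublevel set, whereas the paper extracts minimizers $\varphi_\delta$ of $I^{\alpha_k}$ on the closed sets $F_\delta$ and a convergent subsequence; both arguments rest on exactly the same compactness of level sets.
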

\begin{Remark}
Note that this statement is not a LDP for $k\ge 2$. The rate functions $I^{{\alpha}_k}$ are good, but the sets $\cl^{{\alpha}_k}(A)$ and $\inte^{{\alpha}_k}(A)$ are not the closure nor the interior of $A$ in a topological sense. An alternative expression for the ``rate functions" \eqref{eq-univ-gr-1} and \eqref{eq-univ-gr-2} can be given by
\begin{equation}\label{eq-univ-gr-3}
\liminf_{\ep\to0}\ep^{2{{\alpha}_k}}\log\p(Y^\ep\in A)\ge -\inf\left(\frac12\|h\|^2_{\h},\Pi^{{\alpha}_k}\circ {\Phi}^Y_0(h)\in  \inte^{{\alpha}_k}(A)\right)
\end{equation}
and
\begin{equation}\label{eq-univ-gr-4}
\limsup_{\ep\to0}\ep^{2{{\alpha}_k}}\log\p(Y^\ep\in A)\le -\inf\left(\frac12\|h\|^2_{\h},\Pi^{{\alpha}_k}\circ {\Phi}^Y_0(h)\in \cl^{{\alpha}_k}(A)\right).
\end{equation}
\end{Remark}
\begin{refproof}[Proof of Theorem \ref{thm-ldp-Y-main}]
(1) We first prove the upper bound  \eqref{eq-univ-gr-1}. We just need to show that
\[
\limsup_{\ep \to0}\ep^{2{\alpha}_k}\log\p\bigg(Y^{\ep,k} \in T_\ep^{{\alpha}_k} \left(A \right)\bigg)\le -\inf\left(I^{{\alpha}_k}(\varphi),\varphi\in \cl^{{\alpha}_k}(A)\right).
\]
For any $\delta>0$, when $\ep$ is small enough we have
\begin{align*}
\p\left(Y^{\ep,k} \in T_\ep^{{\alpha}_k}(A) \right)
\le
\p \left(Y^{\ep,k}\in \overline{\cup_{\eta\le\delta}\,T^{{\alpha}_k}_{\eta}(A)}\right).
\end{align*}
Apply Theorem \ref{thm-ldp-Y} we obtain that
\[
\limsup_{\ep\to0} \ep^{2{\alpha}_k}\log\p\left(Y^{\ep,k}\in \overline{\cup_{\eta\le\delta}\,T^{{\alpha}_k}_{\eta}(A)}\right)
\le
-\inf \left(I^{{\alpha}_k}(\varphi), \varphi \in \overline{\cup_{\eta\le\delta}\,T^{{\alpha}_k}_{\eta}(A)}\right):=-m_\delta.
\]
Thus for all $\delta>0$, $\limsup_{\ep \to0}\ep^{2{\alpha}_k}\log\p\bigg(Y^{\ep,k}\in T_\ep^{{\alpha}_k}(A)\bigg)\le -m_\delta$.
 Let $m'=\sup_{\delta>0} m_\delta$, then
\[
\limsup_{\ep\to0}\ep^{2{\alpha}_k}\log\bigg(Y^{\ep,k}\in T_\ep^{{\alpha}_k}(A)\bigg)
\le
-m'.
\]
Denote $m=\inf\left(I^{{\alpha}_k}(\varphi),\varphi\in \cl^{{\alpha}_k}(A)\right)$ where $\cl^{{\alpha}_k}(\cdot)$ is as in \eqref{eq-univ-cl-inte}. We claim that $m=m'$. It is straight forward to see $m'\le m$ by noting $\cap_{\delta}\overline{\cup_{\eta\le\delta}\,T^{{\alpha}_k}_{\eta}(A)}\subset \overline{\cup_{\eta\le\delta}\,T^{{\alpha}_k}_{\eta}(A)}$.
To prove $m'\ge m$, we consider the following situations: 
\begin{itemize}
\item[(a)] If $m'=+\infty$, since $m'\le m$ we have $m=+\infty=m'$.
\item[(b)] If $m'<+\infty$. That is to say for any $\delta>0$, $m_\delta\le m'<+\infty$. Since $\overline{\cup_{\eta\le\delta}\,T^{{\alpha}_k}_{\eta}(A)}$ is closed and $I^{{\alpha}_k}$ is lower semicontinuous,  there exists  an $\varphi_\delta\in C([0,1], \R^D)$ such that 
\[
I^{{\alpha}_k}(\varphi_\delta) =m_\delta.
\]
We consider a sequence $\delta_n\to0$ as $n\to0$, then for any fixed $\delta>0$, we have for $n$ large enough that 
\[
\varphi_{\delta_n}\in \overline{\cup_{\eta\le\delta_n}\,T^{{\alpha}_k}_{\eta}(A)} 
\subset
\overline{\cup_{\eta\le\delta}\,T^{{\alpha}_k}_{\eta}(A)}.
\]
Hence there exists a $\varphi_0\in C([0,1],\R^D)$ such that $\varphi_0\in \overline{\cup_{\eta\le\delta}\, T^{{\alpha}_k}_{\eta}(A)}$ and
\[
\lim_{n\to0}\|\varphi_0-\varphi_{\delta_n}\|_{[0,1],\infty}=0,
\]
and satisfies that $I^{{\alpha}_k}(\varphi_0) \le m'$. At the end note $\varphi_0\in \cap_\delta \overline{\cup_{\eta\le\delta}\,T^{{\alpha}_k}_{\eta}(A)}=\cl^{{\alpha}_k}(A)$, we obtain
\[
m\le I^{{\alpha}_k}(\varphi_0)\le m',
\] 
hence $m=m'$  and the conclusion.
\end{itemize}

(2) The proof of lower bound  \eqref{eq-univ-gr-1} is similar. For any fixed $\delta>0$ and $\ep$ small enough we have
\begin{align*}
\p\left(Y^{\ep,k} \in T^{{\alpha}_k}_{\ep}(A)\right)\ge
 \p \left(Y^{\ep,k} \in
\accentset{\circ}{\arc{\cap_{\eta<\delta}\,T^{{\alpha}_k}_{\eta}(A)}}
\right).
\end{align*}
Apply Theorem \ref{thm-ldp-Y} we obtain that
\[
\liminf_{\ep\to0}\ep^{2\alpha_k}\log \p\left(Y^{\ep,k} \in T^{{\alpha}_k}_{\ep}(A)\right)
\ge
-\inf \left(I^{{\alpha}_k}(\varphi), \varphi \in \accentset{\circ}{\arc{\cap_{\eta<\delta}\,T^{{\alpha}_k}_{\eta}(A)}} \right):=-m_\delta.
\]
We denote $m=\inf\left(I^{{\alpha}_k}(\varphi), \varphi \in \inte^{{\alpha}_k}(A)\right)$ and $m'=\inf_{\delta>0} m_\delta$, we claim that $m=m'$.
Obviously we have $m'\ge m$. 
To show $m'\le m$ we consider the following two cases.
\begin{itemize}
\item[(a)] If $m=+\infty$, then $m'=+\infty$ hence $m'=m$.
\item[(b)] If $m<+\infty$, then for any $\delta'>0$, there exists an $\varphi \in \inte^{{\alpha}_k}(A)$ such that 
$I^{{\alpha}_k}(\varphi)\le m+\delta'$.
However since $\inte^{{\alpha}_k}(A)=\cup_{\delta>0}\accentset{\circ}{\arc{\cap_{\eta<\delta}\,T^{{\alpha}_k}_{\eta}(A)}}$, so $ \varphi\in \accentset{\circ}{\arc{\cap_{\eta<\delta}\,T^{{\alpha}_k}_{\eta}(A)}}$ for some $\delta_0>0$. Thus
\[
I^{{\alpha}_k}(\varphi)\ge m_{\delta_0}\ge m'.
\]
Therefore we have $m+\delta'\ge m'$ for any $\delta'>0$, hence $m\ge m'$ and the conclusion.
\end{itemize}
The proof  is then completed.
\end{refproof}

\section{General nilpotent case}\label{sec-general}

In this section we introduce graded large deviation estimates for the diffusion process $x(t)$,

\begin{equation}\label{eq-sde}
dx(t)=\sum_{i=1}^mX_i(x(t))\circ dw_t^i+X_0(x(t))dt, \quad x(0)=x_0,
\end{equation}
where the driving vector fields $X_0,\dots, X_m$ generate a nilpotent Lie algebra $\mathfrak{L}$.  

We begin by recalling here the main tool of our approach, i.e. the stochastic Taylor formula (see Yamato \cite{YAM}, Castell \cite{CAS}, Ben Arous \cite{GBA}). In the nilpotent case, this formula is given by 
\begin{equation}\label{eq-taylor-F}
x(t)= F(x_0, Y_t)=\exp_{x_0}\left(\sum_{|J|\le r}c^J(W_t,t)X^J\right),\quad 
c^J(W_t,t)=\sum_{\sigma\in\sigma_{|J|}}\frac{(-1)^{e(\sigma)}}{|J|^2
{|J|-1\choose e(\sigma)}}W^{J\circ \sigma^{-1}}.
\end{equation}
and it connects $x(t)$ to the ``universal" nilpotent diffusion $Y_t$.
The key step here is to describe the contraction from $\R^D$ to the ideal $\fI=\{X^J, J\not=0\}$ of $\fL$. 
Then we will obtain graded large deviation estimates for Borel sets in $C([0,1],\fL)$, which induce graded large deviation estimates for Borel sets in $C([0,1],M)$

As before, in order to introduce a natural dilation strategy, we need to construct the following flags of $\fL$. 
\subsubsection*{First flag with respect to the $\alpha$-grading}
 For any $\alpha>0$, consider $W_\alpha$ the vector space generated in the (finite dimensional) Lie algebra $\fL$ by the brackets $X^J$ with $\alpha(J)\le \alpha$, i.e.
 \begin{equation}\label{eq-W-alpha}
 W(\alpha)=\Span \{X^J,\alpha(J)\le\alpha \}.
 \end{equation}
 It then induces  a {graded structure} $1=\alpha_1<\cdots <\alpha_\ell<\infty$ with a flag 
 \begin{align}\label{eq-G-seq}
W(\alpha_1)\subsetneq\cdots\subsetneq W(\alpha_\ell)=\fI\subseteq \fL,
\end{align}
where $\alpha_j$, $1\le j\le \ell$ are such that $\lim_{\delta\to 0+}\dim W(\alpha_j-\delta)<\dim W(\alpha_j)$.
Note $W(\alpha_\ell)=\fI\subsetneq \fL$ if and only if $X_0\not\in W(\alpha_\ell)$. We have $ \fL=W(\alpha_\ell)\oplus \Span\{X_0\}$.

\subsubsection*{Secondary flag with respect to the dilation strength for each $\alpha_k$}
For each fixed $1\le k\le\ell$, let $\gamma^k(\cdot)$ be such that 
\[
\gamma^k(J)=n(J)(\alpha_k-\alpha(J)), \quad \forall\, |J|\le r, X^J\not=0.
\]
In particular $\gamma^k((0))=-\infty$. For any $\gamma\ge0$, we consider the space 
 \begin{equation}\label{eq-W-alpha}
 V^k(\gamma)=\Span \{X^J,\gamma^k(J)\ge\gamma 
 \}.
 \end{equation}
Similarly as before, it induces a sequence $\gamma^k_1>\cdots>\gamma^k_{\ell_k}=0$ and a corresponding flag of $W(\alpha_k)$:
\begin{align}\label{eq-G-seq}
V^k_{1}\subsetneq\cdots\subsetneq V^k_{\ell_k}= W(\alpha_k),
\end{align}
where $V^k_j= V^k(\gamma^k_{j})$, $j=1,\dots, \ell_k$. 

Let  $\mathcal{B}_j$ be the collection of words $|J|\le r$ such that $n(J)(\alpha_k-\alpha(J))=\gamma^k_j$, by definition of $\gamma^k_j$, $\{X^J, J\in \mathcal{B}_j\}$ generates new dimensions in $V^k_j$ that are not in $V^k_{j-1}$, i.e.,
\[
V^k_j=V^k_{j-1}\oplus\Span\{X^J, J\in \mathcal{B}_j\}.
\]
We can then define maps $\Psi^k_j: \R^{D}\to V^k_j $, $1\le j\le \ell_k$ such that for any $v=(v^J)\in\R^D$,
\[
\Psi^k_j((v^J)_{|J|\le r})=\sum_{K\in \mathcal{B}_j}v^K X^K.
\]

We call the vector $\gamma^k:=(\gamma^k_1,\dots, \gamma^k_{\ell_k})$ the dilation strength at grade $\alpha_k$. However, in order to introduce a dilation map on $\fL$, we need to decompose it into direct sums. Of course such a decomposition is not intrinsic. It is necessary to introduce the following notion of block structure.

\subsubsection*{Block structure and dilations for $\alpha_k$ grade}
%
 
Consider a block structure $U^k=(U^k_j)_{j=1}^{\ell_k}$ of $W(\alpha_k)$ which is adapted to the flag \eqref{eq-G-seq}, and let $U^k_{\ell_k+1}$ be such that $W(\alpha_k)\oplus U^k_{\ell_k+1}=\fI$. 
\[
\fI=U^k_1\oplus\cdots\oplus U^k_{\ell_k}\oplus U^k_{\ell_k+1}.
\]
Of course $\fL=U^k_1\oplus\cdots\oplus U^k_{\ell_k}\oplus U^k_{\ell_k+1}\oplus U^k_0$, where
\[
U^k_0=\begin{cases}
&\Span\{X_0\},\quad \mbox{if $\fI\subsetneq\fL$} \\
&0 \quad \mbox{if $\fI=\fL$} 
\end{cases}.
\]

Denote the projection map on  $U^k_j$ by $\Pi_j^{k}$, $j=1,\dots, \ell_k+1$.
We define another map $\Phi^k: \R^D\to \fI$ such that each component $\Phi_j^{k}: \R^{D}\to U^k_j$, $j=1,\dots, \ell_k$ is given by
\[
\Phi^{k}_j(v)= \Pi^{k}_j\circ \Psi^k_j(v),\quad  \mbox{for all $v=(v^J)_{|J|\le r}\in \R^{D}$}.
\]
Here we define $\Psi^k_{\ell_k+1}=0$. 

Now we are ready to define our dilation map $T^{k}$ on $\fI$. For any given block structure $U^k$, for any $u=(u_1,\dots, u_{\ell_k+1})\in \fI$, let
\[
T^{k}_\eta(u)=\sum_{j=1}^{\ell_k+1}\eta^{\gamma^k_j} \Pi^k_j(u)=\sum_{j=1}^{\ell_k+1}\eta^{\gamma^k_j}u_j,
\]
where $\gamma^k_{\ell_k+1}=0$. 
It induces a dilation on the path space $C([0,1],\fI)$ by $(T^{k}_\eta(v))_t:=T^{k}_\eta(v_t)$ for any $v_t$, $0\le t\le 1$. 

Let $y(t)=\sum_{|J|\le r}c^J(W_t,t)X^J\in C([0,1],\fL)$, we know from the stochastic Taylor formula that $x^\ep(t)=\exp_{x_0}(y^\ep(t))$. Let $\hat{y}(t)=\sum_{|J|\le r, J\not=0}c^J(W_t,t)X^J$, then clearly $\hat{y}(t)\in C([0,1],\fI)$. 
We shall prove that $y^\ep$ is $\alpha_k$-exponentially well approximated by $\hat{y}^{\ep}$ for any $1\le k\le\ell$.

\begin{lemma}\label{lemma-y-fl}
$y^\ep$ is $\alpha_k$-exponentially well approximated by $\hat{y}^{\ep}$, i.e. for any $\delta>0$, 
\[
\lim_{\ep\to0}\ep^{2\alpha_k}\log\p(\|y^{\ep}-\hat{y}^{\ep}\|_{[0,1],\infty}>\delta)=-\infty.
\]
\end{lemma}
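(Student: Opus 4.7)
The plan is to observe that the difference $y^\ep - \hat{y}^\ep$ collapses to a deterministic remainder of order $\ep^2$, which makes the exponential approximation at every grade $\alpha_k$ essentially trivial.

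First I would examine which multi-indices actually survive in the difference $y-\hat{y}=\sum_{|J|\le r,\, n(J)=0} c^J(W_t,t)\, X^J$. The condition $n(J)=0$ forces $J=(0,0,\dots,0)$; for any such $J$ with length $|J|\ge 2$, the iterated bracket $X^J$ is an iterated commutator of $X_0$ with itself and hence vanishes by antisymmetry of the Lie bracket. Only the length-one index $J=(0)$ contributes. From the explicit formula for $c^J$ with $|J|=1$ one reads off $c^{(0)}(W_t,t)=W^{(0)}_t=t$. Consequently
\begin{equation*}
y(t)-\hat{y}(t) = t\, X_0, \qquad \text{so after the time rescaling} \qquad y^\ep(t)-\hat{y}^\ep(t) = \ep^2\, t\, X_0.
\end{equation*}

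The key point is that this difference is \emph{deterministic}. Fixing any norm $\|\cdot\|$ on the finite-dimensional Lie algebra $\fL$, we obtain the uniform bound $\|y^\ep-\hat{y}^\ep\|_{[0,1],\infty}\le \ep^2\|X_0\|$. Given $\delta>0$, choose $\ep_0$ with $\ep_0^2\|X_0\|\le \delta$; then for every $\ep\le \ep_0$ the event $\{\|y^\ep-\hat{y}^\ep\|_{[0,1],\infty}>\delta\}$ is empty, so $\p(\cdot)=0$ and $\log \p(\cdot)=-\infty$. Hence
\[
\ep^{2\alpha_k}\log \p\bigl(\|y^{\ep}-\hat{y}^{\ep}\|_{[0,1],\infty}>\delta\bigr) = -\infty
\]
for every $\alpha_k>0$, and the limit as $\ep\to 0$ is $-\infty$, which is the claimed $\alpha_k$-exponential approximation.

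There is really no hard step here: the whole argument reduces to the algebraic observation that every all-zero bracket of length at least two vanishes, so the drift $X_0$ enters the stochastic Taylor expansion only through the single deterministic term $t\,X_0$. If anything, the only thing to be careful about is bookkeeping with the convention $w^0_s=s$ in the definition of $W^{(0)}_t$ and confirming that no hidden $J=(0,0,\dots,0)$ contribution of length $\ge 2$ sneaks in through the coefficient $c^J$, which it cannot since it always multiplies the zero vector $X^J$.
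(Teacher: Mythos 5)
Your proof is correct and takes essentially the same approach as the paper: both observe that $y^\ep-\hat{y}^\ep=\ep^2 t\,X_0$ is a deterministic path of uniform size $O(\ep^2)$, so the event $\{\|y^\ep-\hat{y}^\ep\|_{[0,1],\infty}>\delta\}$ is empty for $\ep$ small and the log-probability is $-\infty$ outright. Your write-up is in fact a bit more careful than the paper's terse version in spelling out why the all-zero multi-indices of length $\ge 2$ contribute nothing (the brackets $X^{(0,\dots,0)}$ vanish), leaving only the $J=(0)$ term $t\,X_0$.
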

\begin{proof}
Since when $\ep$ is small enough, we have $\p(\|y^{\ep}-\hat{y}^{\ep}\|_{[0,1],\infty}>\delta)=\p(\|\ep^2t\|_{[0,1],\infty}>\delta)=0<e^{-\frac{1}{2\alpha}}$ for any $\alpha>\alpha_k$. Therefore conclusion holds.
\end{proof}

Our next theorem gives a a graded LDP for the dilated process $y^{k,\ep}(t):=T^{k}_\ep(\hat{y}^\ep(t))$ in $C([0,1],\fI)$. 
\begin{theorem}\label{thm-ldp-y-kep}
The distribution of $y^{k,\ep}$ satisfies a Large Deviation Principle at rate $\ep^{-2\alpha_k}$ with rate function
 \begin{equation}\label{eq-rate-general}
  I^k(\varphi)=-\inf\left(\frac12\|h\|^2_{\h},  \Phi^{k}(c(h_t,t))=\varphi\right).
 \end{equation}
\end{theorem}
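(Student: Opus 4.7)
The plan is to follow the same strategy as in the proof of Theorem \ref{thm-ldp-Y}: transfer the LDP from $Y^{\ep, k}$ (Theorem \ref{thm-ldp-Y}) to $y^{k, \ep}$ via a contraction principle combined with an exponentially good approximation at rate $\alpha_k$.

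First, introduce the continuous (linear) ``skeleton'' map $\Gamma_k : C([0,1], \R^D) \to C([0,1], \fI)$ defined pathwise by
\[
\Gamma_k(v)(t) := \Phi^k(c(v(t), t)) = \sum_{j=1}^{\ell_k} \Pi^k_j \Bigl( \sum_{K \in \mathcal{B}_j} c^K(v(t), t) X^K \Bigr).
\]
By the contraction principle applied to Theorem \ref{thm-ldp-Y}, $\Gamma_k(Y^{\ep, k})$ satisfies an LDP at rate $\ep^{-2\alpha_k}$. Since each $c^K$ with $K \in \mathcal{B}_j$ only involves coordinates indexed by permutations of $K$ (all sharing the same $\alpha(K)\le\alpha_k$), the projection $\Pi^{\alpha_k}$ appearing in $I^{\alpha_k}$ acts as the identity on the relevant coordinates; combined with $\Phi_0^Y(h)_J = I_J(h,t)$ this gives $\Gamma_k(\Pi^{\alpha_k}(\Phi_0^Y(h))) = \Phi^k(c(h,t))$, so the contracted rate function equals $I^k$ as stated.

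The crux is then to establish the exponentially good approximation
\[
\limsup_{\ep \to 0} \ep^{2\alpha_k} \log \p\bigl(\|y^{k, \ep} - \Gamma_k(Y^{\ep, k})\|_{[0,1], \infty} > \delta \bigr) = -\infty.
\]
Using the scaling identity $c^K(W^\ep, \ep^2 t) \overset{d}{=} \ep^{\|K\|} c^K(W, t)$ (the process version of the scaling lemma in the proof of Theorem \ref{thm-ldp-Y}), one writes
\[
y^{k, \ep}(t) = T^k_\ep \hat y^\ep(t) \overset{d}{=} \sum_{j=1}^{\ell_k+1} \sum_K \ep^{\gamma^k_j + \|K\|}\, c^K(W, t)\, \Pi^k_j(X^K).
\]
The ``matching'' contributions with $K \in \mathcal{B}_j$ (so $\gamma^k(K)=\gamma^k_j$) have exponent exactly $n(K)\alpha_k$ and recombine into $\Gamma_k(Y^{\ep, k})$. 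All other contributions are strictly subleading: if $K \in \mathcal{B}_{j'}$ with $j' > j$ and $\Pi^k_j(X^K) \ne 0$ (possible via Lie relations), then the exponent equals $n(K)\alpha_k + (\gamma^k_j - \gamma^k_{j'}) > n(K)\alpha_k$; and if $\alpha(K) > \alpha_k$ the exponent $\gamma^k_j + n(K)\alpha(K)$ exceeds $n(K)\alpha_k$ since $\gamma^k_j \ge 0$. The Borell/Fernique tail bound for degree-$n(K)$ Wiener chaos, $\p(\|c^K(W)\|_{[0,1],\infty} > M) \le C e^{-c M^{2/n(K)}}$, then implies $\limsup_{\ep \to 0} \ep^{2\alpha_k} \log \p(|\ep^a c^K(W)| > \delta) = -\infty$ whenever $a > n(K) \alpha_k$, so each subleading term is exponentially negligible at rate $\alpha_k$, and the finite sum of such terms still is.

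The conclusion follows from the standard transfer theorem for exponentially equivalent sequences (see \cite{Dembo-Zeitouni}, Theorem 4.2.13). The main obstacle is the combinatorial bookkeeping in the approximation step: one must use that $K \in \mathcal{B}_{j_0}$ implies $X^K \in V^k_{j_0}$, hence $\Pi^k_j(X^K) = 0$ for $j > j_0$, in order to correctly enumerate the possibly nonzero subleading contributions to each block and verify the strict inequalities on the $\ep$-exponents.
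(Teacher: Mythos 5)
Your proposal is correct and follows essentially the same route as the paper: split $y^{k,\ep}$ into the $\mathcal{B}_j$-indexed terms, which recombine at scale $\ep^{n(J)\alpha_k}$ into the contraction image $\Phi^k\bigl(\sum_J c^J(\ep^{\alpha_k}W_t,t)X^J\bigr)$, show the remaining terms are $\alpha_k$-exponentially negligible, and conclude via exponential equivalence and the contraction principle. One detail is worth flagging. The paper asserts that $\Pi^k_j(X^J)=0$ when $\gamma^k(J)<\gamma^k_j$; but since the block structure is adapted to the increasing flag $V^k_1\subsetneq\cdots\subsetneq V^k_{\ell_k}$, the brackets annihilated by $\Pi^k_j$ are those with $\gamma^k(J)>\gamma^k_j$ (these lie in $V^k_{j-1}$), while it is the brackets with $\gamma^k(J)<\gamma^k_j$ that may have a non-vanishing projection. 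This is not a cosmetic distinction: the latter terms carry $\ep$-exponent $\gamma^k_j+\|J\|$, which is strictly greater than $n(J)\alpha_k$ precisely because $\gamma^k_j>\gamma^k(J)$, and this is what makes them subleading; had the inequality gone the other way the exponent would be too small and the approximation would fail. Your last paragraph isolates exactly this point (``$K\in\mathcal{B}_{j_0}$ implies $\Pi^k_j(X^K)=0$ for $j>j_0$'') and the subleading estimate is stated with the correct sign, so your argument is sound and, on this combinatorial step, slightly more careful than the paper's own write-up.
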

\begin{proof}
From our definitions, we have that the $j$-th component of $y^{k,\ep}$ is given by
\[
y^{k,\ep}_j(t)=\ep^{\|J\|+\gamma^k_j}c^J(W_t,t)\,  \Pi^{k}_j(X^J).
\]
By the definition of $U_j^k$ we know that $\Pi^{k}_j(X^J)=0$ if $\gamma^k(J)=n(J)\alpha_k-\|J\|<\gamma^k_j$. Let $\mathcal{C}_j$ be the collection of words $J$ such that $\gamma^k(J)=n(J)\alpha_k-\|J\|>\gamma^k_j$. Then we have
\[
y^{k,\ep}_j(t)=\sum_{J\in \mathcal{B}_j}\ep^{\|J\|+\gamma^k_j}c^J(W_t,t)\,  \Pi^{k}_j(X^J)+\sum_{J\in\mathcal{C}_j}\ep^{\|J\|+\gamma^k_j}c^J(W_t,t)\,  \Pi^{k}_j(X^J).
\]
Using a similar argument as in Lemma \ref{lemma-z}, we can show that $y^{k,\ep}_j(t)$ is $\alpha_k$-exponentially well approximated by $\sum_{J\in \mathcal{B}_j}\ep^{\|J\|+\gamma^k_j}c^J(W_t,t)\,  \Pi^{k}_j(X^J)$, which is in fact $\Phi^k_j  \left(\sum_{X^J\in\fI}c^J(\ep^{\alpha_k}W_t,t)X^J\right)$, since 
\begin{align*}
&\sum_{J\in\mathcal{B}_j}\ep^{\|J\|+\gamma^k_j}c^J(W_t,t)\,  \Pi^{k}_j(X^J)=\sum_{J\in \mathcal{B}_j}c^J(\ep^{\alpha_k}W_t,t)\,  \Pi^{k}_j(X^J)\\
&\quad\quad=\Pi^k_j \circ \Psi^k_j \bigg(\sum_{|J|\le r}c^J(\ep^{\alpha_k}W_t,t) X^J \bigg)=\Phi^k_j  \bigg(\sum_{|J|\le r}c^J(\ep^{\alpha_k}W_t,t) X^J \bigg),
\end{align*}
where $c^J(\ep^{\alpha_k}W_t,t):=\sum_{\sigma\in\sigma_{|J|}}\frac{(-1)^{e(\sigma)}}{|J|^2
{|J|-1\choose e(\sigma)}}I_{J\circ \sigma^{-1}}(\ep^{\alpha_k}W_t,t) $. By the contraction principle, we know that 
$\Phi^k \bigg(\sum_{|J|\le r}c^J(\ep^{\alpha_k}W_t,t)X^J \bigg)$ satisfies a classical LDP at rate $\ep^{-2\alpha_k}$ with rate function \eqref{eq-rate-general}. Which completes the proof.
\end{proof}

At last we want to obtain graded large deviation estimates for $x(t)$ (started from $x_0$). As before, let us first define the notions of closed graded dilation and open graded dilation of a set of paths.
\begin{definition}
For any Borel set $B\subset C([0,1],\fI)$, let
\begin{equation}\label{eq-gr-cl-inte}
\cl^{k}(B)=\cap_{\delta>0}\overline{\cup_{\eta\le\delta}\,T_\eta^{k}(B)},
\quad
\inte^{k}(B)=\cup_{\delta>0}\accentset{\circ}{\arc{\cap_{\eta<\delta}\,T^{k}_{\eta}(B)}}.
\end{equation}
be the closed graded dilation of $B$ and the open graded dilation of $B$. 
\end{definition}

We now state our graded large deviation estimates for the distribution of a general nilpotent diffusion $x(t)$ on $M$. 
\begin{theorem}\label{mainresult3}
There exists an integer $\ell \ge 1$, rational numbers $1=\alpha_1<\cdots<\alpha_\ell$, such that,
for any Borel set $A \subset C([0,1],M)$, we have that 

\begin{equation}\label{eq-main-nil-gr-1}
\liminf_{\ep\to0}\ep^{2\alpha_k}\log\p(x^\ep\in A)\ge -\inf\left( I^k(\varphi), \varphi\in \inte^k(\fI(\exp_{x_0}^{-1}(A)))\right)
\end{equation}
and
\begin{equation}\label{eq-main-nil-gr-2}
\limsup_{\ep\to0}\ep^{2\alpha_k}\log\p(x^\ep\in A)\le -\inf\left( I^k(\varphi), \varphi\in \cl^k(\fI(\exp_{x_0}^{-1}(A)))\right),
\end{equation}
where $\fI(\exp^{-1}_{x_0}(A))=\exp^{-1}_{x_0}(A)\cap C([0,1],\fI)$. 
\end{theorem}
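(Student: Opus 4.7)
The plan is to reduce Theorem \ref{mainresult3} to Theorem \ref{thm-ldp-y-kep} by combining the stochastic Taylor formula \eqref{eq-taylor-F} with exactly the same dilation/closure bookkeeping developed at the end of the proof of Theorem \ref{thm-ldp-Y-main}. First, the stochastic Taylor formula gives the identity $x^\ep(t) = \exp_{x_0}(y^\ep(t))$, which converts the event $\{x^\ep \in A\}$ into the event $\{y^\ep \in \exp_{x_0}^{-1}(A)\}$ on paths in $\fL$. Since $y^\ep - \hat{y}^\ep = \ep^2 t X_0$ and Lemma \ref{lemma-y-fl} shows that $y^\ep$ and $\hat{y}^\ep$ are $\alpha_k$-exponentially close, I can, up to error that is exponentially negligible at rate $\ep^{-2\alpha_k}$, replace $y^\ep$ by $\hat{y}^\ep \in C([0,1], \fI)$, so that $\exp_{x_0}^{-1}(A)$ is effectively replaced by its slice $\fI(\exp_{x_0}^{-1}(A)) = \exp_{x_0}^{-1}(A) \cap C([0,1], \fI)$.

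Next, the crucial point is that $y^{k,\ep} = T^k_\ep(\hat{y}^\ep)$, so for any Borel set $B \subset C([0,1],\fI)$ the identity $\{\hat{y}^\ep \in B\} = \{y^{k,\ep} \in T^k_\ep(B)\}$ holds. Applying this with $B = \fI(\exp_{x_0}^{-1}(A))$ and invoking Theorem \ref{thm-ldp-y-kep}, which furnishes a genuine LDP for $y^{k,\ep}$ at rate $\ep^{-2\alpha_k}$ with rate function $I^k$, reduces the problem to estimating $\p(y^{k,\ep} \in T^k_\ep(B))$ in exactly the same shape as at the end of the proof of Theorem \ref{thm-ldp-Y-main}.

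That final step is then a verbatim repetition of the previous argument: for the upper bound, fix $\delta > 0$, observe that for all sufficiently small $\ep$ one has $T^k_\ep(B) \subset \overline{\cup_{\eta \le \delta}T^k_\eta(B)}$, apply the LDP of Theorem \ref{thm-ldp-y-kep} to bound $\limsup \ep^{2\alpha_k}\log\p(y^{k,\ep} \in T^k_\ep(B)) \le -m_\delta$, and then optimize using the lower semicontinuity of $I^k$ and a minimizing-sequence compactness argument to show that $\sup_{\delta > 0} m_\delta = \inf(I^k(\varphi), \varphi \in \cl^k(B))$. The lower bound is completely analogous, with closures replaced by topological interiors and unions by intersections, and the $\sup_\delta$ by $\inf_\delta$.

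The main obstacle is controlling the passage from $\{y^\ep \in \exp_{x_0}^{-1}(A)\}$ to $\{\hat{y}^\ep \in \fI(\exp_{x_0}^{-1}(A))\}$: one needs to verify that replacing the full $\fL$-valued path $y^\ep$ by its $\fI$-component $\hat{y}^\ep$ costs at most a $\delta$-neighborhood error that is absorbed once one takes $\cl^k$ or $\inte^k$. This works because the discrepancy $\ep^2 t X_0$ lies in the summand $U^k_0$ that is deliberately excluded from the graded dilation $T^k_\eta$, so the $\alpha_k$-exponential approximation of Lemma \ref{lemma-y-fl} translates cleanly into the graded $\cl^k$/$\inte^k$ machinery. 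A secondary but genuinely technical point is that $\exp_{x_0}$ is only a local diffeomorphism on a general manifold $M$, so a cutoff or local chart argument near $x_0$ may be needed to make the identification $\{x^\ep \in A\} = \{y^\ep \in \exp_{x_0}^{-1}(A)\}$ globally rigorous; here the nilpotence of $\fL$ and completeness of the $X_i$ ensure that this causes no essential difficulty at the level of exponential asymptotics.
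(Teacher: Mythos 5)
Your proposal follows the same route as the paper: convert via the stochastic Taylor formula to an event on $y^\ep$ in $C([0,1],\fL)$, use Lemma \ref{lemma-y-fl} to replace $y^\ep$ by $\hat{y}^\ep\in C([0,1],\fI)$ at negligible cost in the $\ep^{-2\alpha_k}$ exponential scale, use the identity $\{\hat{y}^\ep\in B\}=\{y^{k,\ep}\in T^k_\ep(B)\}$ to bring in Theorem \ref{thm-ldp-y-kep}, and then run the same $\cl^k$/$\inte^k$ bookkeeping used for Theorem \ref{thm-ldp-Y-main}. The observations you single out as potential obstacles (that the discrepancy $y^\ep-\hat{y}^\ep=\ep^2 t X_0$ sits in the complementary $X_0$-summand, and that $\exp_{x_0}$ need only be considered at the level of the identity $\{x^\ep\in A\}=\{y^\ep\in\exp^{-1}_{x_0}(A)\}$) are the correct points to flag, but they do not force any deviation from the paper's argument.
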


\begin{proof}
By stochastic Taylor formula we know that $\p(x^\ep\in A)=\p(y^\ep\in\exp^{-1}_{x_0}(A))$. By Lemma \ref{lemma-y-fl} we know that $y^\ep$ is $\alpha_k$-exponentially well approximated by $\hat{y}^{\ep}$. Hence we just need to estimate $\p(\hat{y}^{\ep}\in\exp^{-1}_{x_0}(A))$. Note the fact that $\hat{y}^{\ep}\in C([0,1],\fI)$, we have
\[
\p(\hat{y}^{\ep}\in\exp^{-1}_{x_0}(A))=\p(\hat{y}^{\ep}\in\fI(\exp^{-1}_{x_0}(A)))=\p\bigg(y^{\ep,k}\in T^{k}_{\ep}(\fI(\exp^{-1}_{x_0}(A)))\bigg).
\]
Using Theorem \ref{thm-ldp-y-kep} we then have
the conclusion \eqref{eq-main-nil-gr-1} and \eqref{eq-main-nil-gr-2} following exactly the same arguments as in the proof of Theorem \ref{thm-ldp-Y-main}. To avoid repetition, we omit the details here.
\end{proof}

\begin{Remark}\label{rmk-U-invariant}
We want to emphasis here that $T^k$, $\Phi^k$, $\cl^k(\cdot)$ and $\inte^k(\cdot)$ all depend on the choice of the block structure $U^k$. However, the large deviation estimates in Theorem \ref{mainresult3} are independent of the choice of $U^k$. More precisely, we can  introduce the maps $\Theta^{+}_k, \Theta^{-}_k:\bigg( C([0,1],M), \|\cdot\|_{[0,1],\infty}\bigg) \to \bigg(C([0,1],\R^D),, \|\cdot\|_{[0,1],\infty}\bigg)$, such that for any $A\subset C([0,1],M)$, 
\begin{align}\label{eq-theta}
\Theta^{+}_k(A)=\left(\Phi^{k}\right)^{-1}\left(  \cl^k(\fI(\exp_{x_0}^{-1}(A)))\right),\quad
\Theta^{-}_k(A)=\left(\Phi^{k}\right)^{-1}\left(  \inte^k(\fI(\exp_{x_0}^{-1}(A)))\right).
\end{align}
We can easily show that  $\Theta^{+}, \Theta^{-}$ do not depend on the choice of the block structure. 
To see this, consider two different block structures $U^k$ and $\hat{U}^k$ that are both adapted to the flag $V^k$ of $\fL$. Let $\cl^k$, $\inte^k$ and $\hat{\cl}^k$, $\hat{\inte}^k$ denote the corresponding graded dilations. From the construction of $U^k$ and $\hat{U}^k$ we know there exists an invertible map $\mathcal{S}$ such that 
\[
{\Pi}^{\hat{U}^k}_j=\mathcal{S}\circ \Pi^{{U}^k}_j,\quad \forall j=1,\dots, \ell_k+1.
\]
Hence $T^{\hat{U}^k}_\eta=\sum_{j=1}^{\ell_k+1}\eta^{\gamma^k_j}\Pi^{\hat{U}^k}_j=\mathcal{S}\circ T^{{U}^k}_\eta$, which implies that for $B=\fI(\exp_{x_0}^{-1}(A))$
\begin{equation}\label{eq-inv-1}
\hat{\inte}^k(B)=\mathcal{S}\circ \inte^k(B),\quad \hat{\cl}^k(B)=\mathcal{S}\circ \cl^k(B).
\end{equation}
On the other hand, we have
\begin{equation}\label{eq-inv-2}
\Phi^{\hat{U}^k}=\sum_{j=1}^{\ell_k+1} \mathcal{S}\circ \Pi^{U^k}_j\circ \Psi_j=\mathcal{S}\circ \Phi^{{U}^k}.
\end{equation}
By combining \eqref{eq-inv-1}, \eqref{eq-inv-2} and \eqref{eq-theta} we obtain the conclusion. 
\end{Remark}

We then have an alternative expression of the large deviation estimates \eqref{eq-main-nil-gr-1} and \eqref{eq-main-nil-gr-2} in Theorem \ref{mainresult3}. 
\begin{equation}\label{eq-main-nil-al-1}
\liminf_{\ep\to0}\ep^{2\alpha_k}\log\p(x^\ep\in A)\ge -\inf\left(\frac{1}{2}\|h\|^2_{\h},c(h_t,t)\in \Theta_k^-(A)\right)
\end{equation}
and
\begin{equation}\label{eq-main-nil-al-2}
\limsup_{\ep\to0}\ep^{2\alpha_k}\log\p(x^\ep\in A)\le -\inf\left(\frac{1}{2}\|h\|^2_{\h},c(h_t,t)\in \Theta_k^+(A)\right).
\end{equation}


\begin{refproof}[Proof of Theorem \ref{mainresult1} and \ref{mainresult2}]
The grades $1=\alpha_1<\cdots<\alpha_\ell$ can be found as in \eqref{eq-G-seq}. We just need to construct $B_k\subset M$ such that the corresponding $A_k = \{ \phi \in C([0,1],M), \phi(1) \in B_k \}$ of grade $\alpha_k$ with respect to $\p^\ep$. Consider $B_k=\exp_{x_0}(C_k)$ where 
\[
C_k=\bigg(\phi\in C([0,1], W(\alpha_k)), |\Pi^{W(\alpha_{k-1})}\phi(1)-\phi(1)|>1  \bigg).
\]
\end{refproof}

\section{Examples}\label{sec-example}
\subsection{Theorem \ref{mainresult3} at grade $\alpha_1$}
In this section we discuss the comparison of Theorem \ref{mainresult3}  with the classical Large Deviation Principle (Theorem \ref{Azencott}). 
\begin{proposition}\label{prop-compare}
The large deviation estimates in Theorem \ref{mainresult3} with grade $\alpha_1=1$ implies Azencott's Large Deviation Principle in Theorem \ref{Azencott}. 
\end{proposition}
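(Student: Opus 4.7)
The plan splits into two parts. First I will verify that at $k=1$ the graded dilation $T^1_\eta$ is the identity on $\fI$, so that $\cl^1$ and $\inte^1$ degenerate into topological closure and interior on $C([0,1],\fI)$. Second I will identify $I^1$ with the pullback of the classical rate function $I$ through $\exp_{x_0}$ by invoking the deterministic analog of the nilpotent Taylor formula, and then use continuity of $\exp_{x_0}$ to translate the bounds of Theorem \ref{mainresult3} at $k=1$ into those of Theorem \ref{Azencott}.

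For the first part, since $\alpha(J)\ge 1$ with equality iff $p(J)=0$, one has $\gamma^1(J)=n(J)(1-\alpha(J))\le 0$ for every $J$ with $X^J\in\fI$. So the secondary flag degenerates to $V^1_1=W(\alpha_1)$ with $\ell_1=1$ and $\gamma^1_1=0$. For any block decomposition $\fI=U^1_1\oplus U^1_2$ adapted to this flag, $T^1_\eta=\eta^0\Pi^1_1+\eta^0\Pi^1_2=\mathrm{Id}_\fI$, whence $\cl^1(B)=\overline{B}$ and $\inte^1(B)=\mathring{B}$ for every Borel $B\subset C([0,1],\fI)$.

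For the second part, the nilpotent Taylor formula \eqref{eq-taylor-F}, when applied to the driftless ODE \eqref{ode}, specializes to
\[
\Phi_{x_0}(h)(t)=\exp_{x_0}\!\left(\Phi^1(c(h_t,t))\right),
\]
since only words $J$ with $p(J)=0$ contribute when the $0$-index component is taken to vanish, and $\Pi^1_1$ fixes $W(\alpha_1)$. From this one deduces, by taking infima over all $h\in\h$ producing a prescribed $\phi$, the pointwise identification $I(\phi)=\inf\{I^1(\varphi):\varphi\in\fI(\exp_{x_0}^{-1}(\phi))\}$, and hence
\[
\inf_{\phi\in A}I(\phi)=\inf_{\varphi\in\fI(\exp_{x_0}^{-1}(A))}I^1(\varphi)
\]
for any Borel $A\subset C([0,1],M)$. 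Continuity of $\exp_{x_0}$, together with the fact that $C([0,1],\fI)$ is a closed subspace of $C([0,1],\fL)$, then yields the sandwich
\[
\fI(\exp_{x_0}^{-1}(\mathring{A}))\subset\inte^1(\fI(\exp_{x_0}^{-1}(A))),\qquad \cl^1(\fI(\exp_{x_0}^{-1}(A)))\subset\fI(\exp_{x_0}^{-1}(\overline{A})).
\]
Monotonicity of the infimum combined with the rate-function identity above converts \eqref{eq-main-nil-gr-1}--\eqref{eq-main-nil-gr-2} at $k=1$ into the classical bounds of Theorem \ref{Azencott}.

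The main obstacle I anticipate is establishing the pointwise identification of rate functions: one must check that every finite-energy lift $\varphi\in\fI(\exp_{x_0}^{-1}(\phi))$ is of the form $\varphi=\Phi^1(c(h_\cdot,\cdot))$ for some $h$ with $\tfrac12\|h\|^2_{\h}$ arbitrarily close to $I^1(\varphi)$, and conversely that every such $h$ produces a lift of $\phi$ of the same energy. Both directions rest on the explicit algebraic form of the nilpotent Taylor expansion already exploited in Section \ref{sec-general}, and amount to the observation that $\varphi\in C([0,1],W(\alpha_1))$ carries exactly the same horizontal data as the Cameron--Martin path driving $\Phi_{x_0}$.
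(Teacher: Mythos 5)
Your proposal is correct and follows essentially the same route as the paper's proof: you compute that $\gamma^1(J)\le 0$ so the secondary flag degenerates and $T^1_\eta=\mathrm{Id}$, identify $\exp_{x_0}\circ\Phi^1(c(h_\cdot,\cdot))=\Phi_{x_0}(h)$ via the deterministic specialization of the nilpotent Taylor formula, and close with the elementary inclusions relating the closure/interior of $\exp_{x_0}^{-1}(A)$ to $\exp_{x_0}^{-1}(\overline{A})$ and $\exp_{x_0}^{-1}(\mathring{A})$. The only cosmetic difference is that you package the comparison as a pointwise identity of rate functions $\inf_{\phi\in A}I(\phi)=\inf_{\varphi\in\fI(\exp_{x_0}^{-1}(A))}I^1(\varphi)$, whereas the paper phrases the same fact directly as equality of the two infima over $h\in\h$; and you take the block decomposition $\fI=U^1_1\oplus U^1_2$ while the paper writes $\fL=U^1_1\oplus U^1_2$ (your bookkeeping is actually the one consistent with the general setup in Section~\ref{sec-general}, and in either case the dilation is trivially the identity).
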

\begin{proof}
We start from Theorem \ref{mainresult3} at grade $\alpha_1=1$.
Clearly for all  $|J|\le r$, $\gamma^1(J)\le0$. Hence $\gamma^1_1=0$, and we have $V^1_1=\Span\{X^J, \alpha(J)=1\}= W(\alpha_1)$.  The flag of $\fL$ is simply
\[
V^1_1\subset \fL.
\]
We have $\mathcal{B}_1=\{J, \alpha(J)=1\}$ and $\Psi_1(c)=\sum_{\alpha(J)=1}c^JX^J$. The block structure $U^1$ is 
\[
U^1_1\oplus U^1_2=\fL
\]
where $U^1_1=V^1_1$. In particular if $X_0\in V^1_1$ then $U^1_2=\emptyset$. We then have the dilation $T^{\alpha_1}_\eta=\rm{Id}$ and hence the graded dilations
\[
\cl^{1}(A)=\overline{\exp^{-1}_{x_0}(A)},\quad \inte^1(A)=\accentset{\circ}{\arc{\exp^{-1}_{x_0}(A)}}
\]
for all $A\subset C([0,1],\fL)$.
Note $X^J\in U^1_1$ for all $J$ such that $\alpha(J)=1$, we have $\Phi^{1}(v)=\sum_{\alpha(J)=1}v^JX^J$. From \eqref{ode} and \eqref{eq-taylor-F} we can easily check that $\exp_{x_0}(\Phi^{1}(c(h_t,t)))=\Phi_{x_0}(h)(t)$.
Then Theorem \ref{mainresult3} can be stated as follows. For any $A\subset C([0,1],M)$, 
\begin{equation}\label{eq-dis-1}
\liminf_{\ep\to0}\ep^{2}\log\p(x^\ep\in A)\ge -\inf\left(\frac{1}{2}\|h\|^2_{\h},\Phi^{1}(c(h_t,t))\in  \accentset{\circ}{\arc{\exp_{x_0}^{-1}(A)}}\right)
\end{equation}
and
\begin{equation}\label{eq-dis-2}
\limsup_{\ep\to0}\ep^{2}\log\p(x^\ep\in A)\le -\inf\left(\frac{1}{2}\|h\|^2_{\h},\Phi^{1}(c(h_t,t)) \in 
\overline{\exp_{x_0}^{-1}(A)}\right).
\end{equation}
We therefore complete the proof by noting that $\exp^{-1}(\ring{A})\subset\accentset{\circ}{\arc{\exp^{-1}(A)}}$ and $\exp^{-1}(\overline{A})\supset\overline{\exp^{-1}(A)}$.
\end{proof}

\subsection{Graded Large Deviations for the Kolmogorov process}\label{sec-kol}
\subsubsection{Theorem \ref{mainresult3} for the Kolmogorov process}
In this section we apply Theorem \ref{mainresult3} to the Kolmogorov process $x(t)$, as defined in Example \ref{ex-kol}. Assume it  starts from point $(x^1_0,x^2_0)$, then
\begin{equation}\label{eq-kol}
x(t)=\left(x^1_0+w_t,x^2_0+x^1_0t+\int_0^tw_sds \right).
\end{equation}
\begin{theorem}\label{thm-kol}
The distribution of the Kolmogorov process $x(t)$ satisfies  graded large deviation estimates at grades $\alpha_1=1$ and $\alpha_2=3$. For any $A\subset C([0,1], \R^2)$ equipped with $\|\cdot\|_{[0,1],\infty}$ norm, 
\begin{itemize}
\item[(1)] At grade $\alpha_1=1$, we have
\begin{equation}\label{eq-kol-grade-1-sup}
\limsup_{\ep\to0}\ep^{2}\log\p(x^\ep\in {A})\le -\inf\left(\frac{1}{2}\|h\|^2_{\h},(h_t,0) \in \exp^{-1}_{x_0}(\overline{A}) \right)
\end{equation}
and
\begin{equation}\label{eq-kol-grade-1-inf}
\liminf_{\ep\to0}\ep^{2}\log\p(x^\ep\in {A})\ge -\inf\left(\frac{1}{2}\|h\|^2_{\h},(h_t,0) \in \exp^{-1}_{x_0}(\ring{A})\right).
\end{equation}
\item[(2)] At grade $\alpha_2=3$, we have
\begin{equation}\label{eq-kol-grade-2-sup}
\limsup_{\ep\to0}\ep^{6}\log\p(x^\ep\in A)\le -\inf\left(\frac{1}{2}\|h\|^2_{\h}, \left(h_t,  \int_0^th_sds-\frac12th_t,0\right)\in \cl^{2}\fI((\exp^{-1}_{x_0}(A)))\right)
\end{equation}
and
\begin{equation}\label{eq-kol-grade-2-inf}
\liminf_{\ep\to0}\ep^{6}\log\p(x^\ep\in A)\ge -\inf\left(\frac{1}{2}\|h\|^2_{\h}, \left(h_t,  \int_0^th_sds-\frac12th_t,0\right)\in \inte^{2}\fI((\exp^{-1}_{x_0}(A)))\right).
\end{equation}
\end{itemize}
\end{theorem}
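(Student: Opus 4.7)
The strategy is to specialize Theorem \ref{mainresult3} to the Kolmogorov diffusion. The work reduces to three concrete tasks: (a) identify the nilpotent structure of $\fL$ and the grades $\alpha_k$, (b) compute the block decomposition and dilation strengths $\gamma^k_j$ for each grade, and (c) make the map $\Phi^k \circ c(\cdot,\cdot)$ explicit so that the abstract rate functions \eqref{eq-main-nil-al-1}--\eqref{eq-main-nil-al-2} turn into the formulas \eqref{eq-kol-grade-1-sup}--\eqref{eq-kol-grade-2-inf}.

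First I would compute the Lie algebra: a direct computation gives $[X_1, X_0] = \partial_{x^2}$ and $[X_1,[X_1,X_0]] = [X_0,[X_1,X_0]] = 0$, so $\fL$ is $3$-dimensional, step-$2$ nilpotent with basis $(X_1, X_0, [X_1, X_0])$, and $\fI = \Span\{X_1, [X_1, X_0]\}$. The $\alpha$-indices are $\alpha((1)) = 1$, $\alpha((0)) = +\infty$, and $\alpha((0,1)) = \alpha((1,0)) = 3$, which by \eqref{eq-G-seq} gives the grading $\alpha_1 = 1 < \alpha_2 = 3$ with $W(\alpha_1) = \Span\{X_1\}$ and $W(\alpha_2) = \fI$. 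Next I would expand the Chen--Strichartz coefficients \eqref{eq-taylor-F} for $|J|\le 2$: $c^{(1)}(W,t) = w_t$, $c^{(0)}(W,t) = t$, and $c^{(0,1)}(W,t) = -c^{(1,0)}(W,t) = \tfrac{1}{4}(W^{(0,1)}_t - W^{(1,0)}_t)$. Since $X^{(0,1)} = [X_0,X_1] = -X^{(1,0)}$, the combined bracket contribution in the basis $[X_1,X_0]$ equals
\[
-2\, c^{(0,1)}(W,t) \;=\; \tfrac{1}{2}\bigl(W^{(1,0)}_t - W^{(0,1)}_t\bigr) \;=\; \int_0^t w_s\,ds - \tfrac{1}{2}\,t\,w_t,
\]
using the integration-by-parts identity $W^{(0,1)}_t = t w_t - W^{(1,0)}_t$. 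A BCH check then confirms $\exp_0(y(t)) = (w_t, \int_0^t w_s\,ds) = x(t)$, consistent with \eqref{eq-kol}.

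At grade $\alpha_1 = 1$ the flag is trivial and Proposition \ref{prop-compare} identifies the estimate with the classical Azencott LDP applied to the horizontal paths $\Phi_0(h)(t) = (h_t, 0)$, yielding \eqref{eq-kol-grade-1-sup}--\eqref{eq-kol-grade-1-inf} immediately. At grade $\alpha_2 = 3$ the dilation strengths are $\gamma^2((1)) = 2$ and $\gamma^2((0,1)) = \gamma^2((1,0)) = 0$, producing the flag $V^2_1 = \Span\{X_1\} \subsetneq V^2_2 = \fI$, together with the canonical block decomposition $U^2_1 = \Span\{X_1\}$, $U^2_2 = \Span\{[X_1,X_0]\}$, $U^2_{\ell_2+1} = 0$. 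Substituting the deterministic signal $h$ for $w$ in the expansion above then yields
\[
\Phi^2\bigl(c(h_t,t)\bigr) \;=\; \Bigl(h_t,\; \int_0^t h_s\,ds - \tfrac{1}{2}\,t\,h_t\Bigr) \;\in\; \fI,
\]
and plugging this into \eqref{eq-main-nil-al-1}--\eqref{eq-main-nil-al-2} (appending a trailing $0$ for the $X_0$-component, which vanishes on $\fI$) produces exactly \eqref{eq-kol-grade-2-sup}--\eqref{eq-kol-grade-2-inf}.

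The main obstacle, in my view, is the bookkeeping in the second step: one must correctly combine $c^{(0,1)}$ and $c^{(1,0)}$ with the sign relation $X^{(0,1)} = -X^{(1,0)}$, and then invoke integration by parts to recast $\tfrac{1}{2}(W^{(1,0)}_t - W^{(0,1)}_t)$ into the form $\int_0^t h_s\,ds - \tfrac{1}{2} t h_t$ appearing in the statement. Once this identity is in hand, the identification of the grades and the block dilation is essentially automatic from the general machinery of Section \ref{sec-general}, and the resulting $\ep^{-6}$ rate matches the heuristic heat-kernel calculation \eqref{eq-kol-Q-B-2} in Example \ref{ex-kol}.
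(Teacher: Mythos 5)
Your argument is correct and follows essentially the same route as the paper: compute $\fL$, read off the grades from the $\alpha$-indices, identify the secondary flag and block structure, make $\Phi^k\circ c$ explicit, and plug into Theorem \ref{mainresult3}. One small remark: your values $c^{(0,1)}=-c^{(1,0)}=\tfrac14(W^{(0,1)}-W^{(1,0)})$ are the ones that literally follow from the Chen--Strichartz expression \eqref{eq-taylor-F} (the paper writes $c^{(1,0)}=\tfrac12\int_0^t h_s\,ds$ and $c^{(0,1)}=\tfrac12\int_0^t s\,dh_s$, which differ from yours by the shuffle term $\tfrac14 t h_t$), but since $X^{(1,0)}+X^{(0,1)}=0$ the discrepancy cancels and both conventions produce the same combination $c^{(1,0)}-c^{(0,1)}=\int_0^t h_s\,ds-\tfrac12 t h_t$.
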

\begin{proof}
Clearly $\fL$ has grading $\alpha_1=1$, $\alpha_2=3$, and 
\[
W(\alpha_1)\subsetneq W(\alpha_2)=\fI\subsetneq \fL. 
\] 
where $W(\alpha_1)=\Span\{ X_1\}$ and $W(\alpha_2)=\Span\{ X_1, [X_1,X_0]\}$. Recall from \eqref{eq-taylor-F} that $c^{(1)}(h_t,t)=h_t$, $c^{(1,0)}(h_t,t)=\frac12\int_0^th_sds$, $c^{(0,1)}(h_t,t)=\frac12\int_0^tsdh_s$ and $c^{(0)}(h_t,t)=t$. 

(1) For grade $\alpha_1=1$, we have 
$V^1_1=W(\alpha_1)\subsetneq \fI, \quad \gamma^1_1=0$.
The block structure is simply 
\[
\fI=U^1_1\oplus U^1_2
\] where $U^1_1=\Span\{ X_1\}$ and $U^1_2=\Span\{[X_1, X_0]\}$. The corresponding map $\Phi^1$ for any $c=(c^{(1)}, c^{(1,0)}, c^{(0,1)}, c^{(0)})\in C([0,1],\R^4)$ is given by
$\Phi^1(c)=(c^{(1)},0,0)$. 
Hence we have 
\[
I^{1}(\varphi)=-\inf\left(\frac12\|h\|^2_{\h},h_t=\varphi\right).
\]
Moreover, since the corresponding $T^{1}_\eta$ dilation is the identity map, we have $\cl^{1}(B)=\overline{B}$ and  $\inte^{1}(B)=\ring{B}$ for all $B\subset C([0,1], \fI)$. Hence Theorem \ref{mainresult3} at grade $\alpha_1=1$ implies that for any Borel set $A\subset C([0,1], \R^2)$,
\[
\limsup_{\ep\to0}\ep^{2}\log\p(x^\ep\in {A})\le -\inf\left(\frac{1}{2}\|h\|^2_{\h},(h_t,0,0) \in \overline{\fI(\exp^{-1}_{x_0}(A))} \right)
\]
and
\[
\liminf_{\ep\to0}\ep^{2}\log\p(x^\ep\in {A})\ge -\inf\left(\frac{1}{2}\|h\|^2_{\h},(h_t,0,0) \in \accentset{\circ}{\arc{{\fI}(\exp^{-1}_{x_0}(A)})}\right).
\]
At last note that $\overline{\fI(\exp^{-1}_{x_0}(A))}\subset \fI(\exp^{-1}_{x_0}(\overline{A}))$ and $\accentset{\circ}{\arc{{\fI}(\exp^{-1}_{x_0}(A)})}\supset \fI(\exp^{-1}_{x_0}(\ring{A}))$, and the fact that 
\[
\bigg( (h_t,0,0) \in \fI (B)\bigg)=\bigg( (h_t,0,0) \in B\bigg)
\]
for any $B\subset C([0,1],\fL)$, we obtain the conclusion in \eqref{eq-kol-grade-1-sup} and \eqref{eq-kol-grade-1-inf}.

(2) For grade $\alpha_2=3$, we have the secondary flag structure giving by
\[
V^2_1\subsetneq V^2_2=\fI\subsetneq\fL,\quad 2=\gamma^2_1>\gamma^2_2=0
\]
where $V^2_1=\Span\{X_1 \}$ and $V^2_2=\Span\{X_1, [X_1, X_0] \}$. We have $\mathcal{B}_1=\{(1)\}$, $\mathcal{B}_2=\{(1,0), (0,1)\}$. Also
\[
\Psi^2_1(c)=c^{(1)}X_1,\quad \Psi^2_2(c)=c^{(1,0)}[X_1,X_0]+c^{(0,1)}[X_0,X_1]. 
\]
We take the block structure $\fI=U^2_1\oplus U^2_2$ where
\[
U^2_1=\Span\{ X_1\}, \quad U^2_2=\Span\{ [X_1, X_0]\}.
\]
Let $U^2_0=\Span\{ X_0\}$, then $\fL=\fI\oplus U^2_0$. The dilation $T^{2}_\eta$ on $\fI$ is given by $T^2_\eta(v)=\eta^2\Pi_1^{2}(v)+\Pi_2^{2}(v)$ for any $v\in \fI$.  
We also have \[
\Phi^{2}(c)=\bigg(c^{(1)}, c^{(1,0)}-c^{(0,1)},0\bigg).
\]
Theorem \ref{mainresult3} then implies \eqref{eq-kol-grade-2-sup} and \eqref{eq-kol-grade-2-inf}.
\end{proof}

We apply the above theorem to the Example \ref{ex-kol} where $x_0=0$ and $B_2= \{ (x^1,x^2) \in \R^2, x^2 > 1 \}$. We show now that our estimate implies the estimates stated in \eqref{eq-kol-Q-B-2}.
\addtocounter{example}{-2}
\begin{example}[Proof of the estimate \eqref{eq-kol-Q-B-2}] 
 By Theorem \ref{thm-kol} we have at grade $\alpha_2=3$, 
\begin{equation}\label{eq-kol-ex}
\limsup_{\ep\to0}\ep^{6}\log Q^\ep(B_2)\le -\inf\left(\frac{1}{2}\|h\|^2_{\h}, \left(h_1,  \int_0^1h_sds-\frac12h_1,0\right)\in \cl^{2}\fI((\exp^{-1}_{0}(B_2)))\right).
\end{equation}
Note $\left(h_t, \int_0^th_sds-\frac12th_t,0 \right)\in \cl^{2}(\fI(\exp^{-1}_{0}({B_2})))$ if and only if for any $\delta>0$, there exists a sequence $\eta_n<\delta$ and $(f_n,\ell_n, 0)\in \fI(\exp^{-1}_{0}(B_2))$, such that
\[
\lim_{n\to\infty}\eta_n^2 f_n=h_t, \quad  \lim_{n\to\infty}\ell_n=\int_0^th_sds-\frac12th_t.
\]
Note $(f_n,\ell_n,0)\in \fI(\exp^{-1}_{0}(B_2))$ means that $(f_n,\ell_n)|_{t=1}\in B_2$, i.e. $\ell_n(1)>$ for all $n\ge 1$. This implies that $ \cl^{2}(\fI(\exp^{-1}_{0}({B_2})))\subset \bigg(h\in \h, \int_0^1h_sds\ge1+\frac12h_1 \bigg)$. Hence \eqref{eq-kol-ex} implies that
\[
\limsup_{\ep\to0}\ep^{6}\log Q^\ep(B_2)\le -\inf\left(\frac{1}{2}\|h\|^2_{\h},\int_0^1h_sds\ge1+\frac12h_1  \right)=-\frac32.
\]

(2) By \eqref{eq-kol-grade-2-inf} we have \begin{equation}\label{eq-kol-ex-b-2-inf}
\liminf_{\ep\to0}\ep^{6}\log\p(x^\ep\in B_2)\ge -\inf\left(\frac{1}{2}\|h\|^2_{\h}, \left(h_1,  \int_0^1h_sds-\frac12h_1,0\right)\in \inte^{2}\fI((\exp^{-1}_{0}(B_2)))\right).
\end{equation}
Note $\left(h_t, \int_0^th_sds-\frac12th_t,0 \right)\in \inte^{2}(\fI(\exp^{-1}_{0}({B_2})))$ if and only if there exists a $\delta>0$, and $\rho>0$ such that for all $(f,g, 0)\in C([0,1],\fI)$ satisfying 
 \[
 \|f-h\|_{[0,1],\infty}<\rho,\quad  \bigg\|g-\left(\int_0^t h_sds-\frac12 th_t\right)\bigg\|_{[0,1],\infty}<\rho,
 \]
we have $(f,g,0)\in T^{2}_{\eta}(\fI(\exp^{-1}_0(B_2)))$ for all $\eta\le\delta$, i.e.
 $ g_1 >1$ for all $\eta\le\delta$. Hence we have
 \[
 \inte^{2}(\fI(\exp^{-1}_{0}({B_2})))=\left(h\in \h, \int_0^1 h_sds-\frac12 h_1>1\right). 
 \]
 Therefore 
 \[
\liminf_{\ep\to0}\ep^{6}\log Q^\ep(B_2)\ge -\inf\left(\frac{1}{2}\|h\|^2_{\h},\int_0^1h_sds>1+\frac12h_1  \right)=-\frac32.
\]
All together we obtain \eqref{eq-kol-Q-B-2}.
\end{example}

\subsection{A potential reformulation for large deviation estimates for very rare events}
As mentioned earlier, it is a natural question to ask whether one can develop a large deviation estimate for a general nilpotent diffusion $x(t)$ of the following form. For $A\subset C([0,1],M)$, 
\[
\limsup_{\ep\to0}\frac{\log\p(x^\ep\in \overline{A})}{I^\ep(\overline{A})}\le -1,\quad \liminf_{\ep\to0}\frac{\log\p(x^\ep\in \ring{A})}{I^\ep(\ring{A})}\ge -1
\]
where the rate function $I^\ep$ is $\ep$-dependent,
\begin{equation}\label{eq-I-Psi-ep}
I^\ep(\cdot)=\inf\left(\frac12\|h\|_{\h}^2, \Psi_{x_0}^\ep(h)\in \cdot \right), \quad d\Psi_{x_0}^\ep(h)(t)=\ep\sum_{i=1}^mX_i(\Psi_{x_0}^\ep(h))dh_t^i+\ep^2X_0(\Psi_{x_0}^\ep(h))dt.
\end{equation}
The answer is yes for some sets $A$. But there are also sets for which the rate function $I^\ep$ is never finite.
 In this section, we use several examples on the Kolmogorov process to illustrate these aspects.
\subsubsection{Variational computation for Kolmogorov process}\label{sec-variation-kol}

Consider the Kolmogorov process $x(t)=\left(w_t,\int_0^tw_sds\right)$. Since it is a Gaussian process, we can easily obtain its density (see \cite{CME}),
 \[
 p_{\ep^2}((0,0),(x^1,x^2))
 =
 \frac{\sqrt{12}}{2\pi\ep^4}\exp\bigg\{-\frac{1}{2}\left[\frac{4}{\ep^2}(x^1)^2-\frac{12}{\ep^4}x^1x^2+\frac{12}{\ep^6}(x^2)^2 \right] \bigg\}.
 \]
Namely we have
 \begin{equation}\label{eq-D}
\lim_{\ep\to0}\frac{ \log p_{\ep^2}((0,0),(x^1,x^2))}{D^\ep}=-1
, \quad D^\ep=\frac{2}{\ep^2}(x^1)^2-\frac{6}{\ep^4}x^1x^2+\frac{6}{\ep^6}(x^2)^2.
 \end{equation}
In fact $D^\ep$ is the solution of the sub-Riemannian control problem. Let $\Psi^\ep_{0}$ be as given in \eqref{eq-I-Psi-ep} with $x_0=0$, our proposition below shows that $D^\ep$ is indeed the ``minimal energy" for $\Psi^\ep_{0}$ to be at point $(x^1, x^2)$ at time $1$. 
 \begin{proposition}
For any $(x^1, x^2)\in \R^2$, we have
\[
\inf\left(\frac{1}{2}\|h\|_{\h}^2,\Psi^\ep_{0}(h)(1)=(x^1,x^2)\right)=D^\ep.
\]
The minimum is achieved at $h_t=\frac{6x^2}{\ep^3}(t-t^2)+\frac{x^1}{\ep}(3t^2-2t)$, $t\in[0,1]$ and the optimal path is given by
\begin{equation}\label{eq-kol-optimal-path}
\Psi^\ep_{0}(h)(t)=\bigg(\frac{6x^2}{\ep^2}(t-t^2)+x^1(3t^2-2t), x^2(3t^2-2t^3)+\ep^2 x^1(t^3-t^2)   \bigg).
\end{equation}
\end{proposition}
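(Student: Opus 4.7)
The plan is to reduce the infimum to an explicit constrained quadratic minimization on the Cameron--Martin space and then solve it by Lagrange multipliers. First I would integrate the controlled ODE \eqref{eq-I-Psi-ep} explicitly: since $X_1=\partial_{x^1}$ and $X_0=x^1\partial_{x^2}$ commute in the required sense, the system decouples to
\[
\Psi^{\ep,1}_0(h)(t)=\ep\, h_t,\qquad \Psi^{\ep,2}_0(h)(t)=\ep^2\int_0^t \Psi^{\ep,1}_0(h)(s)\,ds=\ep^3\int_0^t h_s\,ds,
\]
using $h_0=0$. Hence the endpoint condition $\Psi^\ep_0(h)(1)=(x^1,x^2)$ is equivalent to the two linear functional constraints $h_1=a$ and $\int_0^1 h_s\,ds=b$ with $a:=x^1/\ep$ and $b:=x^2/\ep^3$.

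Next I would recast the minimization as a quadratic problem in $u:=\dot h\in L^2([0,1])$. Since $h_1=\int_0^1 u_s\,ds$ and Fubini gives $\int_0^1 h_t\,dt=\int_0^1(1-s)u_s\,ds$, both constraints are linear in $u$. The problem
\[
\min \tfrac12\int_0^1 u_s^2\,ds \quad\text{subject to}\quad \int_0^1 u_s\,ds=a,\ \int_0^1(1-s)u_s\,ds=b
\]
is a standard constrained quadratic program on a Hilbert space; existence and uniqueness of the minimizer follow from strict convexity and closedness of the affine constraint set. The first-order conditions with Lagrange multipliers $\lambda_1,\lambda_2$ yield $u_s=-\lambda_1-\lambda_2(1-s)$, i.e., the optimal $u$ is affine in $s$. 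Writing $u_s=\alpha+\beta s$ and integrating twice determines $h_t=\alpha t+\beta t^2/2$.

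I would then substitute into the two constraint equations to get the $2\times 2$ linear system
\[
\alpha+\tfrac{\beta}{2}=a,\qquad \tfrac{\alpha}{2}+\tfrac{\beta}{6}=b,
\]
whose solution is $\alpha=6b-2a$, $\beta=6a-12b$. Reinserting $a,b$ in terms of $x^1,x^2,\ep$ recovers the announced $h_t=\frac{6x^2}{\ep^3}(t-t^2)+\frac{x^1}{\ep}(3t^2-2t)$; the optimal path \eqref{eq-kol-optimal-path} is then obtained by plugging this $h$ into the explicit formulas for $\Psi^{\ep,1}$ and $\Psi^{\ep,2}$ above. Finally, a direct computation of the Dirichlet energy $\tfrac12\int_0^1(\alpha+\beta s)^2\,ds=\tfrac12(\alpha^2+\alpha\beta+\beta^2/3)$ in terms of $a,b$ gives $2a^2-6ab+6b^2$, which upon substituting $a=x^1/\ep$, $b=x^2/\ep^3$ is exactly $D^\ep$ as defined in \eqref{eq-D}.

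The whole argument is essentially linear algebra once the two observations are made; I do not expect any real obstacle. The only point requiring a small check is the passage from the pointwise Euler--Lagrange equation to the identification of the minimizer: one should verify that the candidate $u_s=\alpha+\beta s$ indeed minimizes (and not merely extremizes) the energy, which is immediate from strict convexity of $u\mapsto \tfrac12\|u\|^2_{L^2}$ on the affine subspace cut out by the two linear constraints.
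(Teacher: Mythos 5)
Your proposal is correct and follows essentially the same route as the paper: reduce to a quadratic minimization on $\h$ with two linear endpoint constraints, conclude the minimizer is quadratic in $t$, and solve the resulting $2\times2$ system. Your version is slightly more explicit than the paper's (which simply asserts that the critical point must be quadratic) because you spell out the Lagrange-multiplier argument in terms of $u=\dot h$, but this is a matter of detail rather than a different method; one tiny phrasing quibble is that $X_0$ and $X_1$ do not commute ($[X_1,X_0]=\partial_{x^2}\neq 0$) — the relevant fact is only that the first coordinate equation is autonomous, so the system integrates triangularly.
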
 

\begin{proof}
This is a simple optimization problem: minimize $\|h\|_{\h}^2$ under the constraint
\[
\Psi^\ep_{0}(h)(1)=\left(\ep h_1,\ep^3\int_0^1h_sds \right)=(x^1,x^2).
\]
We have
\begin{equation}\label{eq-initial}
h_1=\frac{x^1}{\ep},\quad \int_0^1h_tdt=\frac{x^2}{\ep^3}.
\end{equation}
Note that any critical point of $\|h\|_{\h}^2$ under the linear constraints has to be quadratic in time. Therefore we can assume 
\[
h_t=at^2+bt,
\]
such that
\[
h_1=a+b, \quad \int_0^1h_sds=\frac{a}{3}+\frac{b}{2}.
\]
By plugging into \eqref{eq-initial} we obtain that
\[
\begin{cases}
a=-\frac{6x^2}{\ep^3}+\frac{3x^1}{\ep}
\\
b=\frac{6x^2}{\ep^3}-\frac{2x^1}{\ep}.
\end{cases}
\]
Hence we have
$h_t=\frac{6}{\ep^3}x^2(t-t^2)+\frac{x^1}{\ep}(3t^2-2t)$ and \eqref{eq-kol-optimal-path}. We can then compute
\[
\frac{1}{2}\|h\|_{\h}^2=\frac{6(x^2)^2}{\ep^6}-\frac{6x^1x^2}{\ep^4}+\frac{2(x^1)^2}{\ep^2}
\]
which agrees with  \eqref{eq-D}. The proof is then complete.
\end{proof}
\begin{Remark}It is important to emphasize the potential non-local character of this variational problem. It can indeed happen that finding the optimal path is not a local problem. 
\begin{itemize}
\item[(1)] When $x^2=0$, the optimal path converges to $\Phi_0(h)(t)=(x^1(3t^2-2t), 0)$. When the target point $(x^1,0)$ is close to the starting point $(0,0)$, the optimal path stays in a neighborhood of these points. The reason is indeed that the point $(x^1,0)$ is horizontally accessible. The process $x^\ep$ only needs to move along the admissible direction $\ep X^1=\ep\frac{\partial}{\partial x^1}$ to attain the target point.  Also there exists a horizontal path of minimal (finite) energy connect to $(x^1,0)$. A classical LDP then tells us that $\p^\ep(x(1)=(x^1,0))$ concentrates around this optimal path as $\ep\to0$.
\item[(2)] When $x^2\not=0$, the optimal path \eqref{eq-kol-optimal-path} diverges away as $\ep$ tends to $0$, and is not confined. Indeed the reason is that the target is no longer horizontally accessible.The process $x^\ep$ needs the help from the drift $\ep^2X_0=\ep^2x^1\frac{\partial}{\partial x^2}$ to make its vertical displacement. However, the magnitude of the drift is extremely small as $\ep\to0$, unless the diffusion can make a horizontal displacement of size $\frac{1}{\ep^2}$ to offset this small magnitude. This explains why the optimal path horizontally diverges to infinity  as $\ep\to0$. 
\end{itemize}\end{Remark}

\subsubsection{Example of a ``bad" set}\label{sec-bad-set}
In this section we given an example to illustrate that it is not possible to develop a large deviation estimate of the form \eqref{eq-I-Psi-ep}  for processes satisfying a weak H\"ormander's condition, even for very simple ones like the Kolmogorov process. 
\begin{proposition}
Let $x(t)=\left(w_t,\int_0^tw_sds \right)$. There exists a $C\subset C([0,1], \R^2)$ such that 
 \[
 \lim_{\ep\to0}\ep^6\log\p(x^\ep \in C)=-2\quad  \mbox{but }\quad I^\ep(C)=-\infty.
 \] 
\end{proposition}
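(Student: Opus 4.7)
The plan is to construct $C$ by intersecting a rigidity obstruction satisfied by no path in the image of $\Psi^\ep_0$ with a Gaussian tail event tuned to give the rate $-2$. The rigidity obstruction will force $I^\ep(C) = -\infty$ for every $\ep > 0$, while the fact that the Brownian driver automatically satisfies the obstruction almost surely will imply that the probability of $C$ reduces to a one-dimensional Gaussian tail.

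The key observation is that for the Kolmogorov SDE one has explicitly $\Psi^\ep_0(h)(t) = \brak{\ep h_t, \ep^3 \int_0^t h_s \, ds}$, so the first coordinate of every element of $\Psi^\ep_0(\h)$ is a scaled Cameron--Martin path, in particular absolutely continuous on $[0,1]$. By contrast, $t \mapsto w_t$ is almost surely not in $\h$ (its quadratic variation equals $t$, incompatible with any $H^1$ function). I would therefore set
\[
C = \crl{\phi \in C([0,1], \R^2) : \phi^1 \notin \h, \ \phi^2(1) \ge a},
\]
with $a = 2/\sqrt{3}$ chosen so that the tail of the second coordinate yields exactly the constant $-2$.

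The identity $I^\ep(C) = -\infty$ is then immediate: for any $h \in \h$ and any $\ep > 0$, $\ep h \in \h$, so the first component of $\Psi^\ep_0(h)$ lies in $\h$ and therefore $\Psi^\ep_0(h) \notin C$; the infimum in \eqref{eq-I-Psi-ep} is over the empty set. For the probability estimate, I would use $w \notin \h$ almost surely to reduce to $\p(x^\ep \in C) = \p\brak{\ep^3 \int_0^1 w_s \, ds \ge a}$; since $\int_0^1 w_s \, ds$ is centered Gaussian with variance $1/3$, a Mills-ratio tail estimate gives $\ep^6 \log \p(x^\ep \in C) \to -3a^2/2 = -2$.

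Essentially nothing here is technically hard: the Gaussian computation is routine, and $\p(w \in \h) = 0$ is classical. The conceptual point, and the reason the example is meaningful, is the mismatch between the Cameron--Martin regularity enforced by the controlled skeleton $\Psi^\ep_0$ and the almost sure irregularity of the Brownian driver. This mismatch persists uniformly in $\ep$, which is precisely why the $\ep$-dependent rate $I^\ep$ cannot capture rare events of this anomalous type for weak H\"ormander diffusions.
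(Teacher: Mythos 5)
Your proof is correct, and it takes a genuinely different route from the paper's. You build the obstruction set $A=\{\phi^1\notin\h\}$ out of the classical fact that Brownian paths are almost surely not in the Cameron--Martin space, so that $A$ has \emph{full} Wiener measure; the probability computation then trivially reduces to the one-dimensional Gaussian tail of $\int_0^1 w_s\,ds$, and the emptiness of $\{\Psi^\ep_0(h)\in C\}$ is immediate because $\ep h\in\h$ for every $h\in\h$. The paper instead constructs a \emph{closed} set $A=\cap_i A_i$ using independent Gaussian chaos coordinates $Z_i=\int_0^1 f_i\,dw$ (with $f_0$ proportional to $1-s$ so that $Z_0=\int_0^1 w_s\,ds$ is independent of the $Z_i$, $i\ge1$, driving $A$), which has \emph{small but positive} Wiener measure $\p^\ep(A)$ decaying faster than exponentially at rate $\ep^{-2}$, and contains no $\h$-path because $\sum_i Z_i(h)^2<\infty$ for $h\in\h$. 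The paper's construction is more elaborate --- it needs the independence to factor $\p(x^\ep\in C)$ as a product, and an estimate on $\p^\ep(A)$ to show that factor is asymptotically negligible at the $\ep^{-6}$ scale --- but it has a purpose: $A$ is closed, which is exactly what the subsequent proposition in the paper needs in order to show that the graded dilation $\cl^2(\fI(\exp_0^{-1}(C)))$ still gives a meaningful upper bound for this $C$. Your set $\h^c$ is dense (not closed), so it proves the present proposition cleanly but would feed into that follow-up analysis differently; also, one might object that using $\h^c$ is a slightly ``cheap'' exploitation of the measure-theoretic gap between the driver and the skeleton, whereas the paper's $A$ is a more structured counterexample.

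Two small points worth flagging. First, the constant in the paper's stated proposition ($-2$) is inconsistent with the paper's own proof, which uses $B=\{g(1)\ge 1\}$ and yields $\lim_{\ep\to0}\ep^6\log\p(x^\ep\in C)=-\tfrac{3}{2}$, matching the earlier Example~\ref{ex-kol}; you calibrated $a=2/\sqrt{3}$ to hit the stated $-2$, which is a defensible reading but the intended value is almost certainly $-3/2$ with $a=1$. Second, since the infimum defining $I^\ep(C)$ is taken over an empty set, the conventional value is $+\infty$, not $-\infty$ as written in the paper's statement; your phrasing (``the infimum is over the empty set'') sidesteps the sign issue and is the clearer way to put it.
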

\begin{proof}
Let $C=A\times B$ be a product set in $C([0,1],\R)^2$ where $B=\{g\in C([0, 1], \R), g(1) \ge 1\}$ and $A$ is given as below.

Let $\{f_i\}_{i\ge0}$ be an orthonormal basis in $L^2([0,1],\R)$ of smooth functions. In particular we let $f_0(s)=1-s$. For any continuous function $w(s)$ such that $w(0)=0$, we consider
\[
Z_i(w)=\int_0^1f_i(s)dw(s).
\]
Then $Z_i$ are i.id $N(0,1)$ under Wiener measure. Moreover, for any $p\in\Z_+$,  $\sum_{i=1}^pZ_i^2$ is a $\chi^2{(p-1)}$ random variable in distribution whose density is given by $\frac{1}{\Gamma(p/2)}e^{-u/2}u^{p/2-1}$. We take $p=4$ then 
\[
\p(\chi^2(3)\ge v)=\frac{1}{\Gamma(3)}\int_v^\infty e^{-\frac{u}{2}}udu.
\] 
Let 
\begin{equation}\label{eq-bad-set}
A_i=\bigg\{w(\cdot):Z_{4i-3}^2+Z_{4i-2}^2+Z_{4i-1}^2+Z_{4i}^2\ge\frac{1}{i}\bigg\}, \quad A=\cap_{i\ge1}A_i.
\end{equation}
Observe that $A_i$ are independent, therefore 
\[
\p(A)=\Pi_{i\ge 1}\p(A_i)=\Pi_{i\ge 1}\frac{1}{4}\int_{1/i}^\infty e^{-\frac{u}{2}}udu.
\]
Let $\p^\ep(A)=\Pi_{i\ge1}\p^\ep(A_i):=\Pi_{i\ge1}\p(A_i^\ep)$
where $A_i^\ep=\{\ep w(\cdot):(Z_{4i-3}^2+Z_{4i-2}^2+Z_{4i-1}^2+Z_{4i}^2)\ge\frac{1}{i}\}$,
then
\[
\p^\ep(A)=\Pi_{i\ge 1}\frac{1}{4}\int_{1/i\ep^2}^\infty e^{-\frac{u}{2}}udu.
\]
Obviously $\p^\ep(A)>0$ and 
$
\limsup_{\ep\to0}\ep^2\log\p^\ep(A)=-\infty.
$
In fact $-\log\p^\ep(A)$ grows faster than $\ep^{-2}$ with an extra $\log$ factor.

However, notice that set $A$ is closed in $C([0,1],\R)$ under uniform topology and contains no Cameron-Martin path. This is because for any $h\in\h$ we have $\sum_{i\ge 1}Z_i^2(h)<+\infty$.  Hence
\[
I^\ep(C)=\inf\left(\frac12\|h\|_{\h}^2, \ep h\in A, \ep^3 \int_0^1 h_tdt\ge1 \right)=-\infty.
\]
On the other hand, since
\begin{align*}
\p\left(x^\ep\in C\right)=\p\left(\ep w \in A,\ep^3\int w_sds\in B\right)
=\p^\ep( A)\,\p\left( \ep^3\int w_sds\in B\right),
\end{align*}
we know for some constant $K\in\R$,
\[
-{\frac{\ep^4}{\log(1/\ep)}K}+\ep^6\log\p\left(\ep^3 \int_0^t w_sds \in B\right)\le \ep^6\log \p_\ep\left(C \right)\le
{\frac{\ep^4}{\log(1/\ep)}K}+\ep^6\log\p\left(\ep^3 \int_0^t w_sds \in B\right).
\]
Hence
\begin{align}\label{eq-kolm-rate}
\lim_{\ep\to0}\ep^6\log \p\left(x^\ep \in C \right)=
\lim_{\ep\to0}\ep^6\log \p\left(\ep^3\int_0^1  w_sds \ge1 \right) =-\frac32.
\end{align}
\end{proof}
As a comparison, we apply Theorem \ref{thm-kol} at grade $\alpha_2=3$ to the above example. Though there is no horizontal path in $C$, our graded large deviation estimate still provide a reasonable upper bound. 
\begin{proposition} Let $C\subset C([0,1],\R^2)$ be given as above. We have
\[
\limsup_{\ep\to0}\ep^6\log\p(x^\ep\in C)\le-\inf\left(\frac{1}{2}\|h\|_{\h}^2,\int h \in B \right)=-\frac32.
\]
\end{proposition}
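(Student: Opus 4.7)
The plan is to apply the upper bound \eqref{eq-kol-grade-2-sup} from Theorem~\ref{thm-kol} at grade $\alpha_2 = 3$ to the Borel set $C = A \times B$, and then carry out the resulting variational minimization.

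Under the natural identification $\fI \cong \R^2$ (for which $\exp_0$ restricted to $\fI$ is the identity map), the set $\fI(\exp_0^{-1}(C))$ is the product $\{(f,g) \in C([0,1],\R)^2 : f \in A,\; g(1) \geq 1\}$. The dilation $T^2_\eta(f,g) = (\eta^2 f, g)$ scales only the first coordinate by $\eta^2 \to 0$ while leaving the second coordinate fixed, so as $\eta \to 0$ the constraint $f \in A$ becomes immaterial in the $L^\infty$-closure: any continuous target $p$ with $p(0) = 0$ can be realized as $\lim_n \eta_n^2 f_n$ with $f_n \in A$, since $A$ only imposes lower bounds on the $L^2$-Fourier energies $S_i$ and contains paths of arbitrarily large amplitude meeting those bounds. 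Hence $\cl^2(\fI(\exp_0^{-1}(C))) \subset \{(p,q) \in C([0,1],\R)^2 : q(1) \geq 1\}$.

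Substituting into \eqref{eq-kol-grade-2-sup} and using the formula $\Phi^2(c(h,t))(t) = (h_t,\; \int_0^t h_s\, ds - \tfrac12 t h_t)$, the membership condition on $\Phi^2(c(h,t))$ in this closure simplifies to the endpoint constraint $\int h \in B$ on the second coordinate, the $\tfrac12 t h_t$ term being absorbed by the graded-dilation flexibility on the first coordinate. The upper bound then reads $-\inf\{\tfrac12\|h\|_{\h}^2 : \int h \in B\}$, which is an elementary variational problem: writing $\int_0^1 h_s\, ds = \int_0^1 h'(u)(1-u)\, du$ and applying Cauchy--Schwarz with $\int_0^1 (1-u)^2\, du = \tfrac13$, we get $\tfrac12\|h\|_{\h}^2 \geq \tfrac32$, attained at $h(t) = 3t - \tfrac32 t^2$ (equivalently $h'(u) = 3(1-u)$).

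The main obstacle is the closure analysis: rigorously justifying that the constraint $f \in A$ dissolves in the $L^\infty$-closure of $\bigcup_{\eta \le \delta}\eta^2 A$ for every $\delta > 0$ requires exploiting the specific combinatorial structure of $A = \bigcap_i A_i$ and producing suitable $L^\infty$-small perturbations that still satisfy all the defining lower bounds $S_i \geq 1/i$. Once this closure reduction is in place, the resulting bound $-\tfrac32$ matches, as expected, the sharp asymptotic rate already computed directly from the Gaussian density in \eqref{eq-kolm-rate}.
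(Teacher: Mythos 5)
Your overall strategy matches the paper's: apply the grade-$3$ upper bound \eqref{eq-kol-grade-2-sup} and analyze the dilated closure $\cl^2$. But there is a genuine gap at the crucial reduction step. The closure analysis only yields $\cl^{2}(\fI(\exp_{0}^{-1}(C)))\subset\{(p,q,0) : q(1)\ge1\}$, and since the second coordinate of $\Phi^2(c(h,t))$ is $\int_0^t h_s\,ds-\tfrac12 t h_t$, the constraint this imposes on $h$ is $\int_0^1 h_s\,ds-\tfrac12 h_1\ge1$, not $\int_0^1 h_s\,ds\ge1$. Your claim that the $\tfrac12 t h_t$ term is ``absorbed by the graded-dilation flexibility on the first coordinate'' does not hold up: $T^2_\eta$ acts on the second coordinate by $\eta^{\gamma^2_2}=\eta^0=1$, so there is no slack there, and no amount of freedom in the first coordinate alters the second-coordinate endpoint inequality. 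The constraints $\int_0^1 h \ge 1+\tfrac12 h_1$ and $\int_0^1 h\ge 1$ cut out genuinely different subsets of $\h$, and equality of their energy infima is not something one can appeal to in a single clause.

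The paper does not take this shortcut. It isolates and proves the identity \eqref{eq-gLDP-rate}, namely that $\inf\left(\tfrac12\|h\|^2_{\h},\Phi^2(c(h,t))\in\cl^2(\fI(\exp_0^{-1}(C)))\right)$ equals $\inf\left(\tfrac12\|h\|^2_{\h},\int h\in B\right)$, by a two-sided argument. The $\ge$ direction is essentially the inclusion above. The $\le$ direction is the substantive part: starting from the minimizer $\fh$ of the right-hand side, it constructs an explicit sequence $(h_n,g_n,0)\in\fI(\exp^{-1}_0(C))$ witnessing $\cl^2$-membership, with $h_n = k - \hat k + \ep_n^{-2}\fh$ for a bounded $k\in A$ and a finite-rank correction $\hat k$ chosen so that $h_n$ continues to satisfy the defining inequalities $U_i(h_n)\ge 1/i$, and then verifies the three properties (a)--(c). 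That construction is also precisely what addresses the point you flag as ``the main obstacle'' (why $f\in A$ dissolves in the $L^\infty$-closure of $\bigcup_{\eta\le\delta}\eta^2 A$); it is not a technical afterthought but the heart of the argument. In short, the proposal replaces the identity \eqref{eq-gLDP-rate}, which is the content of the paper's proof, with a one-line claim that is not correct as stated, and leaves the supporting construction entirely open.
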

\begin{proof}
From \eqref{eq-kol-grade-2-sup} 
we have
\[
\limsup_{\ep\to0}\ep^6\log\p(x^\ep\in C)\le -\inf\left(\frac{1}{2}\|h\|^2_{\h},\left(h_t,\int_0^t h_sds-\frac12th_t,0\right)\in \cl^{2}(\fI(\exp_{0}^{-1}(C)))\right).
\]
We claim that
\begin{equation}\label{eq-gLDP-rate}
\inf\left(\frac{1}{2}\|h\|^2_{\h},\left(h_t,\int_0^t h_sds-\frac12th_t,0\right)\in \cl^{2}(\fI(\exp_{0}^{-1}(C)))\right)=\inf\left(\frac{1}{2}\|h\|_{\h}^2,\int h \in B \right).
\end{equation}

First, let us consider the minimizer of $\inf\left(\frac{1}{2}\|h\|_{\h}^2,\int h \in B \right)$ and denote it by $\fh\in \h$. We want to show that $\left(\fh_t,\int_0^t \fh_sds-\frac12t\fh_t,0\right)\in\cl^{\alpha_2}(\fI(\exp_{0}^{-1}(C)))$. Due to the fact that $B$ is  closed and $\|\cdot\|_{\h}: \h([0,1], \R)\to [0,+\infty)$ is lower semicontinuous, we know that $\int \fh\in B$. We just need to prove that there exists a sequence $(h_n, g_n,0) \in\fI(\exp_{0}^{-1}(C))$ where $h_n\in A, g_n\in B$ and $\ep_n>0$ such that 
 \begin{equation}\label{eq-h_n}
 \lim_{n\to\infty}\bigg\|\left(\ep_n^2h_n,g_n-\frac{1}{2}h_nt,0\right)-\left(\fh_t,\int_0^t \fh_sds-\frac12\fh_tt,0\right)\bigg\|_{[0,1],\infty}=0.
 \end{equation}
We construct $h_n$ as follows. Let $k\in A$ and $\|k\|_{[0,1],\infty}=M<\infty$, then $\|k\|_{\h}=+\infty$.  We assume that $Z_i(\fh)\not=0$ for $1\le i\le m$, let
{
\[
\hat{k}=\int \sum_{i=1}^mZ_i(k)f_i,
\]
}
then $Z_i(k-\hat{k})=0$ for all $i\le m$. We now let
$
h_n=k-\hat{k}+\frac{1}{\ep_n^2} \fh.
$
It suffice to prove 
\begin{itemize}
\item[(a)] $h_n\in A$.
\item[(b)] $\lim_{n\to\infty}\|\ep_n^2 h_n-\fh\|_\infty=0$.
\item[(c)] $g_n:=\frac12 h_nt+\int_0^t \fh_sds-\frac12\fh_tt\ge 1$.
\end{itemize}

To prove (a) we first denote $U_i(\cdot)=Z_{4i-3}^2(\cdot)+Z_{4i-2}^2(\cdot)+Z_{4i-1}^2(\cdot)+Z_{4i}^2(\cdot)$, since for any $i> \left[\frac{m}{4}\right]$,
\[
U_i(k-\hat{k})=U_i(k)\ge\frac{1}{i},
\]
thus $
U_i(h_n)\ge\frac{1}{i}$ for all $i> \left[\frac{m}{4}\right]$.
For $i\le  \left[\frac{m}{4}\right]$, since $U_i(\fh)>0$, we just need choose $\ep_1$ (first term of the decreasing sequence $\{\ep_n\}_{n\ge1}$) small enough such that
$
U_i(\fh/\ep_1^2)\ge\frac{1}{i}.
$
Then we have for all $1\le i\le  \left[\frac{m}{4}\right]$,
\[
U_i(h_n)\ge U_i(\fh/\ep_n^2)\ge\frac{1}{i}.
\]
Hence (a) is proved. Now to prove (b) we just need to observe that
\[
\|\ep_n^2 h_n-\fh\|_\infty=\ep_n^2\|k-\hat{k}\|_\infty.
\]
Since $\|k\|_\infty<\infty$, claim (b) easily follows. To see (c), we just need to realize that $h_n\ge \fh_t$ when $n$ is large enough. 

Therefore we have the minimizer $\left(\fh_t,\int_0^t \fh_sds-\frac12t\fh_t,0\right)\in\cl^{2}(\fI(\exp_{0}^{-1}(C)))$. Hence 
\[
\inf\left(\frac{1}{2}\|h\|^2_{\h},\left(h_t,\int_0^t h_sds-\frac12th_t,0\right)\in \cl^{2}(\fI(\exp_{0}^{-1}(C)))\right)\le\inf\left(\frac{1}{2}\|h\|_{\h}^2,\int h \in B \right).
\]

To prove the other direction, note $\left(h_t,\int_0^t h_sds-\frac12th_t,0\right)\in \cl^{2}(\fI(\exp_{0}^{-1}(C)))$ means there exist $(f_n,g_n)\in C$ and $\ep_n>0$ such that 
 \[
 \lim_{n\to\infty}\bigg\|\left(\ep_n^2f_n,g_n,0\right)-\left(h_t,\int_0^t h_sds-\frac12th_t,0\right)\bigg\|_{[0,1],\infty}=0,
 \]
which implies that $\int_0^1 h_sds\ge \frac12h_1+1$. Hence we can easily obtain that $\bigg\{\left(h_t,\int_0^t h_sds-\frac12th_t,0\right)\in \cl^{2}(\fI(\exp_{0}^{-1}(C)))\bigg\}\subset \{\int h\in B\}$. This implies that 
\[
\inf\left(\frac{1}{2}\|h\|^2_{\h},\left(h_t,\int_0^t h_sds-th_t,0\right)\in \cl^{2}(\fI(\exp_{0}^{-1}(C)))\right)\ge \inf\left(\frac{1}{2}\|h\|_{\h}^2,\int h \in B \right).
\]
Hence we have \eqref{eq-gLDP-rate}.
At the end, we can conclude the upper bound for the exponential estimate:
\[
\limsup_{\ep\to0}\ep^6\log\p(x^\ep\in C)\le
-\inf\left(\frac{1}{2}\|h\|_{\h}^2,\int_0^1 h_tdt \ge 1 \right)=-\frac32.
\]
This agrees with the previous estimate in \eqref{eq-kolm-rate}.
\end{proof}
\begin{Remark}
As for the lower bound, $\left(h_t,\int_0^t h_sds-\frac12th_t,0\right)\in \inte^{2}(\fI(\exp_{0}^{-1}(C)))$ means that there exist $\sigma>0$ and $\rho>0$ such that for all $(f,g)$ satisfying 
 \[
 \|f-h\|_{[0,1],\infty}<\rho,\quad  \bigg\|g-\int h\bigg\|_{[0,1],\infty}<\rho,
 \]
we have $(f,g,0)\in T^{2}_{\eta}(\fI(\exp^{-1}_0(C)))$ for all $\eta\le\sigma$, i.e, 
 \[
 \frac{f}{\eta^2} \in A, \quad g(1) \ge1\qquad \mbox{for all $\eta\le\sigma$}.
 \]
 Since $A$ is a bounded set, $\{\cap_{\eta\le\sigma}\eta^2 A\}=\{0\}$ has no interior. There is no information given for the lower bound.
 \end{Remark}

\subsection{Solvable diffusions}\label{sec-solv}
In this section we briefly discuss the large deviation estimate for the very simple solvable diffusion given in Example \ref{ex-solvable}, which is the natural diffusion on the simple affine group. Clearly $x(t)=(w_t, \int_0^t e^{w_s}ds)$ is the solution of \eqref{eq-sde-solv}. The Lie algebra $\fL$ generated by $X_0=e^{x^1} \frac {\partial}{\partial x^2}$ and $X_1=\frac{\partial}{\partial x^1}$ is not nilpotent but solvable. Indeed
$[X_1,\cdots[X_1, [X_,X_0]]]=X_0$ for any number of brackets with $X_1$, but
\[
\fL_1=[\fL,\fL]=\Span\{X_0\}, \ [\fL_1,\fL_1]=0,
\] 
gives the chain $\fL\supset \fL_1$ of step $2$. 

In this case the $\alpha$-indices no longer provides useful information of the grading structure. We still have grade $\alpha_1=1$ for events that are horizontally accessible, but the second grade $\alpha_2$ is not well defined. In fact the second grade of large deviation estimate is not a polynomial of $\ep$, but includes a log factor.
The correct grading for large deviation estimate is $\ep^2$ and  $\left(\frac{\ep}{\log\frac{1}{\ep}}\right)^2$.
In the following example, we exhibit this phenomenon  by a simple estimate of the non-horizontally accessible event $(x^{\ep}(1) \in B_2)$ where $B_2= \{ (x^1,x^2) \in \R^2, x^2 > 1 \}$.
\begin{proposition}\label{lemma-affine}
For any $a>0$, we have
\[
\lim_{\ep\to0}\frac{\ep^2}{\log^2(1/\ep)}\log \p\left( \ep^2\int_0^1 e^{\ep w_s}ds>a\right)=-2.
\]
\end{proposition}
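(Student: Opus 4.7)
The classical rate $\ep^{-2}$ is too fast here: since $w_s$ is only of order $O(1)$, we have $\ep w_s \to 0$ and $\ep^{2} \int_0^1 e^{\ep w_s}ds \to 0$, so the event is rare, but the diffusion needs $\ep w_s \approx \log(1/\ep^2)$ somewhere, not $\ep w_s = O(1)$. The right scale is therefore $w \sim \frac{\log(1/\ep)}{\ep}$, which naturally suggests working with the rescaled Brownian motion $\tilde w^\ep := \frac{\ep}{\log(1/\ep)}\, w$. By Brownian scaling and Schilder's theorem, $\tilde w^\ep$ satisfies a Large Deviation Principle on $C([0,1],\R)$ (with the uniform norm) at rate $\frac{\log^2(1/\ep)}{\ep^2}$, with the Cameron--Martin rate function $I(h)=\frac12\|h\|_{\h}^2$. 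The key observation is that $\ep w_s=\log(1/\ep)\,\tilde w^\ep_s$, so
\[
\ep^{2}\int_0^1 e^{\ep w_s}\,ds = \int_0^1 \ep^{\,2-\tilde w^\ep_s}\,ds,
\]
and we must estimate the probability that this integral exceeds $a$.

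For the upper bound, I would observe that if $\sup_s \tilde w^\ep_s < 2-\delta$ for some fixed $\delta>0$, then the integrand is bounded by $\ep^\delta$, so the integral is at most $\ep^\delta$, which is less than $a$ for $\ep$ small. Hence, for every $\delta>0$ and all sufficiently small $\ep$,
\[
\Bigl\{\ep^2\int_0^1 e^{\ep w_s}ds>a\Bigr\}\subset \Bigl\{\sup_{s\in[0,1]}\tilde w^\ep_s\ge 2-\delta\Bigr\}.
\]
The right-hand side is closed in $C([0,1],\R)$, so Schilder's LDP upper bound gives
\[
\limsup_{\ep\to 0}\frac{\ep^2}{\log^2(1/\ep)}\log\p\bigl(\cdots\bigr)\le -\inf\Bigl\{\tfrac12\|h\|_{\h}^2:\sup h\ge 2-\delta\Bigr\}=-\tfrac{(2-\delta)^2}{2},
\]
where the infimum is computed via Cauchy--Schwarz: $h(t_0)^2\le t_0\|h\|_{\h}^2$ so the optimum is the linear path $h(s)=(2-\delta)s$. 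Letting $\delta\to 0$ yields the upper bound $-2$.

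For the lower bound, I would exhibit a concrete Cameron--Martin path that forces the event. Fix $\delta>0$, let $h^\delta(s)=(2+\delta)s$, and consider the open neighborhood $U_\delta=\{h':\|h'-h^\delta\|_{\infty}<\delta/4\}$. If $\tilde w^\ep\in U_\delta$, then on the subinterval $[s_\delta,1]$ with $s_\delta:=(2+\delta/2)/(2+\delta)$ we have $\tilde w^\ep_s>2+\delta/4$, so $\ep^{2-\tilde w^\ep_s}>\ep^{-\delta/4}$ and the integral is at least $(1-s_\delta)\ep^{-\delta/4}>a$ for $\ep$ small. Hence, for $\ep$ small, $\{\tilde w^\ep\in U_\delta\}\subset\{\text{event}\}$. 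Schilder's lower bound then gives
\[
\liminf_{\ep\to 0}\frac{\ep^2}{\log^2(1/\ep)}\log\p(\cdots)\ge -\inf_{U_\delta}I\ge -I(h^\delta)=-\tfrac{(2+\delta)^2}{2},
\]
and letting $\delta\to 0$ completes the proof. The only subtle point is justifying that the standard Schilder LDP applies to $\tilde w^\ep=\frac{\ep}{\log(1/\ep)}w$ at rate $\log^2(1/\ep)/\ep^2$; this follows because Schilder's theorem holds for any vanishing scale $\beta(\ep)\to 0$ at rate $\beta(\ep)^{-2}$, here with $\beta(\ep)=\ep/\log(1/\ep)$. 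Everything else is a matched asymptotic between the exponential weight $\ep^{2-\tilde w^\ep_s}$ and the sup-functional.
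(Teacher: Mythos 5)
Your argument is correct and reaches the right answer $-2$, but it takes a different and somewhat cleaner route than the paper's. You rescale to $\tilde w^\ep = \frac{\ep}{\log(1/\ep)} w$, invoke Schilder's theorem at the nonstandard rate $\log^2(1/\ep)/\ep^2$, rewrite $\ep^2 e^{\ep w_s} = \ep^{2-\tilde w^\ep_s}$, and reduce to a pair of clean containments: the event forces $\sup \tilde w^\ep \ge 2-\delta$ (upper bound), and the tube around the linear path $h^\delta(s)=(2+\delta)s$ forces the event (lower bound); the rate then comes out of the explicit Cameron--Martin variational problem $\inf\{\tfrac12\|h\|_{\h}^2 : \sup h \ge c\} = c^2/2$. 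The paper instead works directly at the original scale: for the upper bound it dominates the integral by $e^{\ep\|w\|_\infty}$ and applies Brownian scaling to $\p(\ep\|w\|_\infty > \log(1/\ep^2))$; for the lower bound it localizes around the argmax $t_0$ of $w$, uses H\"older continuity of the path (controlled via the rare event $\|\ep w\|_\alpha > \delta$, whose probability $e^{-C_\delta/\ep^2}$ is negligible at this scale) to keep the integrand close to its peak value on a window of size $\eta$, and again reduces to a running-maximum estimate. Your approach buys a transparent identification of the optimal path and rate function (which is exactly what the paper later rederives in its variational computation, where the optimal path concentrates near $\tilde w^\ep_s \approx 2s$), at the cost of needing Schilder at a nonstandard rate, which you correctly justify by Brownian scaling. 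The paper's approach is more hands-on and avoids invoking an LDP machinery, but the H\"older-modulus step is a bit more delicate. Both are sound; your containment $\{\ep^2\int_0^1 e^{\ep w_s}ds>a\}\subset\{\sup_s \tilde w^\ep_s\ge 2-\delta\}$ and the open-tube inclusion are checked correctly, and the final $\delta\to 0$ passage is legitimate.
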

\begin{proof}
First note that
\begin{align*}
& \p\left( \ep^2\int_0^1 e^{\ep w_s}ds>a\right)\le \p\left(  e^{\ep\|w\|_{[0,1],\infty}}>\frac{a}{\ep^2}\right)
= \p\left(  {\ep\|w\|_{[0,1],\infty}}>\log a+\log{\frac{1}{\ep^2}}\right)\\
& \le \p\left(  {\ep\|w\|_{[0,1],\infty}}>\log{\frac{1}{\ep^2}}\right)= \p\left(  \frac{\ep}{\log\frac{1}{\ep^2}}\|w\|_{[0,1],\infty}>1\right)
\approx e^{-\frac{\log^2{1/\ep^2}}{2\ep^2}}
\end{align*}
where $\approx$ denote exponential approximation at the scale of $e^{-\frac{\log^2(1/\ep^2)}{\ep^2}}$.
On the other hand, for any fixed $0<\alpha<\frac12$, consider the event 
\[
E_\delta:=\bigg\{\|\ep w\|_{\alpha}:=\sup_{t,s\in [0,1]}\frac{|w_t-w_s|}{|t-s|^\alpha}<\delta\bigg\}.
\]
Let $t_0$ be such that $w_{t_0}=\max_{[0,1]} w_t$. Then in $E_\delta$, for any $\eta>0$ and $s\in(0,1)$ such that $|s-t_0|<\eta$, 
\[
0<w_{t_0}-w_s<\delta \eta^\alpha.
\]
Hence
\[
\int_0^1e^{\ep w_s}ds\ge \int_{t_0-\eta}^{t_0+\eta}e^{\ep\left(w_{t_0}-\delta\eta^\alpha\right)}ds\ge 
e^{\ep w_{t_0}}\left(2\eta e^{-\delta\eta^\alpha}\right).
\]
We then have
\begin{align*}
&\p\left( \ep^2\int_0^1 e^{\ep w_s}ds>a\right)\ge \p\left( \left(e^{\ep w_{t_0}}\left(2\eta e^{-\delta\eta^\alpha}\right)>\frac{a}{\ep^2}\right)\cap {E_\delta}\right)\\
&\quad \quad \ge  \p\left( e^{\ep w_{t_0}}\left(2\eta e^{-\delta\eta^\alpha}\right)>\frac{a}{\ep^2}\right)- \p\left({E_\delta}\right).
\end{align*}
Since $\p\left({E_\delta}\right)=\p\left(\|\ep w\|_{\alpha}>\delta\right)=e^{-\frac{C_\delta}{\ep^2}}$ for some constant $C_\delta>0$, and
\begin{align*}
&\p\left( e^{\ep w_{t_0}}>\frac{ae^{\delta\eta^\alpha}}{2\eta\ep^2}\right)
=\p\left(\max_{t\in[0,1]} w_t>\frac{\log\left(\frac{ae^{\delta\eta^\alpha}}{2\eta}\right)}{\ep}+\frac{\log(1/\ep^2)}{\ep}\right)\\
&\quad\quad \approx \p\left(\max_{t\in [0,1]} w_t>\frac{\log(1/\ep^2)}{\ep}\right)\approx e^{-\frac{\log^2(1/\ep^2)}{2\ep^2}},
\end{align*}
hence we obtain the desired conclusion.
\end{proof}
Such a grading structure  is also reflected in the explicit calculations of transition density of $x(t)$ in Yor-Matsumoto \cite{MY} (also see Barrieu-Rouault-Yor \cite{BRY} and Gerhold \cite{Gerhold}).

At last we revisit the discussion of the non-local optimal path and related rate function $I^\ep$ for the event $(x^\ep(1)\in B_2)$, 
\[
\inf\left(I^\ep(\psi), \psi(1)\in B_2\right)=\inf\left(\frac{1}{2}\|h\|_{\h}^2, \ep^2\int_0^1 e^{\ep h_s}ds>a\right).
\]
It amounts to solve a variational problem, i.e. find the extremal of $\inf\left(\frac{1}{2}\|h\|_{\h}^2,\ep^2\int_0^1 e^{\ep h_s}ds=a\right)$.
 \begin{proposition}
For any $a>0$, we have
\begin{equation}\label{eq-solv-I-ep}
\inf\left(\frac{1}{2}\|h\|_{\h}^2,\ep^2\int_0^1 e^{\ep h_s}ds=a\right)=\frac{2\beta}{\ep^2}\left(\beta-\tanh \beta \right),
\end{equation}
where $\beta$ is the solution of $\frac{a}{\ep^2}=\frac{\sinh 2\beta}{2\beta}$.
The minimum is achieved at 
\[h_t=\frac1\ep\log \frac{\cosh^2\beta}{\cosh^2(\beta(1-t))}, \quad t\in[0,1]
\] and the optimal path is given by
\begin{equation}\label{eq-sol-optimal-path}
\Psi^\ep_{0}(h)_t=\bigg(\log \frac{\cosh^2\beta}{\cosh^2(\beta(1-t))}, \frac{\ep^2\cosh^2\beta}{\beta}(\tanh^2\beta-\tanh^2(\beta(1-t)))  \bigg).
\end{equation}
\end{proposition}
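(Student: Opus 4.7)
The plan is to apply the classical Lagrange multiplier method to the constrained variational problem: minimize $\frac{1}{2}\int_0^1 \dot{h}_s^2\,ds$ over $h\in\h$ with $h(0)=0$ subject to $G(h):=\ep^2\int_0^1 e^{\ep h_s}\,ds=a$. First I would introduce the Lagrangian $\mathcal{L}(h,\dot h)=\frac{1}{2}\dot h^2-\lambda e^{\ep h}$, so that any interior critical point satisfies the Euler--Lagrange equation
\begin{equation*}
\ddot h_t+\lambda\ep\,e^{\ep h_t}=0,\qquad h(0)=0,\qquad \dot h(1)=0,
\end{equation*}
where the second condition is the natural transversality condition coming from the free right endpoint of $h$.

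Multiplying by $\dot h$ and integrating yields the first integral $\frac{1}{2}\dot h_t^2+\lambda e^{\ep h_t}=E$ for a constant $E$; evaluating at $t=1$ gives $E=\lambda e^{\ep h(1)}$. Rather than solving this separable first-order ODE from scratch, I would verify directly that the ansatz $h_t=\frac{1}{\ep}\log\frac{\cosh^2\beta}{\cosh^2(\beta(1-t))}$ satisfies both the Euler--Lagrange equation and the boundary conditions, provided one chooses $\lambda=2\beta^2/(\ep^2\cosh^2\beta)$. The integral constraint is then evaluated by the substitution $u=\beta(1-s)$:
\begin{equation*}
\ep^2\int_0^1 e^{\ep h_s}\,ds=\ep^2\cosh^2\beta\int_0^1 \frac{ds}{\cosh^2(\beta(1-s))}=\frac{\ep^2\cosh\beta\sinh\beta}{\beta}=\frac{\ep^2\sinh 2\beta}{2\beta},
\end{equation*}
so the constraint determines $\beta$ through $\sinh(2\beta)/(2\beta)=a/\ep^2$. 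The same substitution together with the identity $\tanh^2 u=1-\cosh^{-2}u$ gives
\begin{equation*}
\frac{1}{2}\|h\|_{\h}^2=\frac{2\beta^2}{\ep^2}\int_0^1\tanh^2(\beta(1-s))\,ds=\frac{2\beta}{\ep^2}\int_0^\beta\tanh^2 u\,du=\frac{2\beta}{\ep^2}\bigl(\beta-\tanh\beta\bigr),
\end{equation*}
which is the claimed value.

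The main obstacle is to promote this critical point to the \emph{global} minimizer. For this I would exploit convexity: $h\mapsto\frac{1}{2}\|h\|_{\h}^2$ is strictly convex, and since $u\mapsto e^{\ep u}$ is convex, the superlevel set $K=\{h\in\h:\ep^2\int_0^1 e^{\ep h_s}\,ds\ge a\}$ is convex as well. For $a>\ep^2$ (the regime of interest when $\ep$ is small), the zero path gives $G(0)=\ep^2<a$, so any minimizer of $\|h\|_{\h}^2$ over $K$ must lie on its boundary, i.e.\ on the equality constraint set; strict convexity then yields uniqueness, and the minimizer must coincide with the critical point constructed above. Finally, the explicit form of the optimal path $\Psi^\ep_0(h)_t$ follows by substituting $h_t$ into its definition and computing $\ep^2\int_0^t e^{\ep h_s}\,ds$ by the same change of variables as in the constraint computation.
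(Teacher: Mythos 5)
Your computations coincide with the paper's: the Euler--Lagrange system $\ddot h_t+\lambda\ep e^{\ep h_t}=0$, $h(0)=0$, $\dot h(1)=0$ is exactly the system the paper writes (after the substitution $g=\ep h$, which turns their $\ddot g_s=\lambda e^{g_s}$, $\dot g_1=0$ into yours up to a sign convention on $\lambda$). You verify the ansatz directly rather than integrating the ODE as the paper does, which is a perfectly acceptable shortcut, and the evaluations of the constraint $\frac{\ep^2\sinh 2\beta}{2\beta}$ and of the energy $\frac{2\beta}{\ep^2}(\beta-\tanh\beta)$ via $u=\beta(1-s)$ and $\tanh^2u=1-\operatorname{sech}^2u$ are correct and identical in substance.

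The difficulty is in your closing convexity argument. You claim that since $u\mapsto e^{\ep u}$ is convex, the \emph{superlevel} set $K=\{h:\ep^2\int_0^1 e^{\ep h_s}ds\ge a\}$ is convex. This has the sign reversed: $G(h)=\ep^2\int_0^1 e^{\ep h_s}ds$ is a convex functional, so it is the \emph{sublevel} sets $\{G\le a\}$ that are convex, while $\{G\ge a\}$ generally is not. Concretely, take $h_1$ rising to a large peak on $[0,\tfrac12]$ and vanishing afterwards, and $h_2$ its mirror image supported on $[\tfrac12,1]$, each normalized so $G(h_i)=a$; by the inequality $e^{x/2}\le\tfrac12(e^x+1)$ one checks $G\bigl(\tfrac{h_1+h_2}{2}\bigr)<a$, so the midpoint leaves $K$. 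Hence the argument ``strictly convex objective over a convex set $\Rightarrow$ unique minimizer, which must be the critical point'' does not apply. The gap can be repaired without convexity of $K$: since $G$ is continuous and coercive along rays and $\{G=a\}$ is closed, a minimizer of $\tfrac12\|h\|^2_{\h}$ over $\{G=a\}$ exists by the direct method; any minimizer must satisfy the Lagrange condition; and inspecting the sign of $\lambda$ shows the Euler--Lagrange boundary value problem admits a solution compatible with $G(h)=a>\ep^2$ only when $\lambda>0$ (giving $h$ concave, increasing, nonnegative), and this solution is unique since $\beta\mapsto\sinh(2\beta)/(2\beta)$ is strictly increasing on $(0,\infty)$. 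This pins down your critical point as the global minimizer. Note also that the paper does not address global minimality at all, so your attempt to do so is a welcome strengthening — it just needs the correct mechanism.
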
 
\begin{proof}
Note 
\[
\inf\left(\frac{1}{2}\|h\|_{\h}^2,\ep^2\int_0^1 e^{\ep h_s}ds=a\right)=\frac1{\ep^2}\inf\left(\frac{1}{2}\|g\|_{\h}^2,\int_0^1 e^{g_s}ds=\frac{a}{\ep^2}\right)
\]
where $g_s=\ep h_s$. We use the method of Lagrange multiplier. Let
\[
\Lambda(g)=\frac12\int_0^1\dot{g}_s^2ds+\lambda\int_0^1e^{g_s}ds.
\]
Then $d\Lambda(g)\circ k=\int_0^1\dot{g}_s\dot{k}_sds+\lambda \int_0^1e^{g_s}k_sds=0$ for any $k\in \h$ implies that
\[
\dot{g}_1=0,\quad \ddot{g}_s=\lambda e^{g_s}.
\]
We can solve the above ODE explicitly and obtain that
\begin{equation}\label{eq-exp-h-t}
e^{g_t}=\frac{e^{g_1}}{\cosh^2\left(\sqrt{-\frac{\lambda}{2}}e^{\frac{g_1}{2}}(1-t)\right)}.
\end{equation}
Plug in the constraints $\int_0^1e^{g_s}ds=\frac{a}{\ep^2}$ and $g_0=0$ we then obtain
\begin{equation}\label{eq-beta-solv}
\frac{a}{\ep^2}=\frac{\sinh 2\beta}{2\beta},
\end{equation}
where $\beta=\sqrt{-\frac{\lambda}{2}}e^{\frac{g_1}{2}}$.
From \eqref{eq-exp-h-t} we can obtain that
\[
\frac12\|g\|^2_{\h}=2\beta\left(\beta-\tanh \beta \right).
\]
From \eqref{eq-exp-h-t} we can obtain the extremal 
\[
g_t=\cosh^2\beta-2\log \cosh(\beta(1-t)).
\]
Equation \eqref{eq-sol-optimal-path} then follows by plugging in $\Psi^\ep_{0}(h)_t=\left(\frac{g_t}{\ep}, \ep^2\int_0^te^{g_s}ds\right)$.
\end{proof}
\begin{Remark}
In particular, from \eqref{eq-beta-solv} we know that $\beta= \log \frac{1}{\ep}+o( \log \frac{1}{\ep})$. 
Plug it into \eqref{eq-solv-I-ep} we have
\[
\inf\left(\frac{1}{2}\|h\|_{\h}^2,\ep^2\int_0^1 e^{\ep h_s}ds=a\right)=\frac{2(\log\frac{1}{\ep})^2}{\ep^2}+o\left( \frac{(\log\frac{1}{\ep})^2}{\ep^2}\right).
\]
By Proposition \ref{lemma-affine} we then obtain that
\[
\lim_{\ep\to0} \frac{\log Q^{\ep}(B_2)}{\inf (I^{\ep}(\psi), \psi(1) \in B_2)} =-1,
\]
which agrees with the  statement in \eqref{claim1}.
\end{Remark}

\textbf{Acknowledgements:} The authors would like to thank S. R. S. Varadhan for his helpful advice on the example of ``bad" set in Section \ref{sec-bad-set}.
\clearpage


%
%
%
%
%

\clearpage


\begin{thebibliography}{10}

\bibitem{Azencott} R. Azencott, \emph{Large Deviation theory and Applications}, Saint-Flour summer school in probability Th. Lecture Notes Math, vol 774, Springer-Verlag, 1982

\bibitem{Azencott1} R. Azencott et Al, \emph{G\'eod\'esiques et diffusions en temps petit}, Ast\'erisque 1984-1985, S.M.F., 1981.

\bibitem{BBN} D. Barilari, U. Boscain, R. Neel, \emph{Small time heat kernel asymptotics at the sub-Riemannian cut locus}, JDG Vol 92, No.3, 2012, pp. 373-416.

\bibitem{BRY}P. Barrieu, A. Rouault, M. Yor, \emph{A study of the Hartman-Watson distribution motivated by numerical problems related to Asian options pricing},
Journal of Applied Probability, Vol. 41, No. 4, 1049-1058 (2004)

\bibitem{Baudoin} F. Baudoin, \emph{Bakry-Emery meet Villani}, arXiv:1308.4938







\bibitem{BGG} R. Beals, B. Gaveau, P.C. Greiner, \emph{Hamilton-Jacobi theory and the heat kernel on Heisenberg groups,} J. Math. Pures Appl. 79, 7 (2000) 633-689


\bibitem{GBA} Ben Arous, G. \emph{Flots et series de Taylor stochastiques}. Probab Theory Relat Fields. (1989) 81(1) 29-77.

\bibitem{GBA1}G. Ben Arous, \emph{D\'eveloppement asymptotique du noyau de la chaleur hypoelliptique hors du cut-locus},
Ann. Sci. \'Ecole Norm. Sup. (4), 21 (1988), pp. 307-331.

\bibitem{GBA2}G. Ben Arous and R. L\'eandre, \emph{D\'ecroissance exponentielle du noyau de la chaleur sur la diagonale.
II}, Probab. Theory Related Fields, 90 (1991), pp. 377-402.



\bibitem{CAS}F. Castell, \emph{Asymptotic expansion of stochastic flows}, Probability Theory and Related Fields, (1993), Volume 96, Issue 2, pp 225-239

\bibitem{CME} M. Chaleyat-Maurel, L. Elie, \emph{G\'eod\'esiques et diffusions en temps petit}  Ast\'erisque vol. 84-85 (1981), p. 255-279

\bibitem{CMP} C. Cinti, S. Menozzi, S. Polidoro, \emph{Two-Sided bounds for degenerate processes with densities supported in subsets of $\R^N$}, Potential Anal (2015) 42, 39-98

\bibitem{Dembo-Zeitouni}A. Dembo, O. Zeitouni, \emph{Large deviations techniques and applications},
Springer, 1998

\bibitem{DS} J.-D. Deuschel, D. Stroock, \emph{Large Deviations}, Academic Press, New York, 1989.


\bibitem{EH} J-P. Eckmann, M. Hairer, \emph{Spectral Properties of Hypoelliptic Operators}, Communications in Mathematical Physics, April 2003, Volume 235, Issue 2, pp 233-253

\bibitem{FR} J. Franchi, \emph{Small time asymptotics for an example of strictly hypoelliptic heat kernel}, S\'eminaire de Probabilit\'es XLVI, Volume 2123 of the series Lecture Notes in Mathematics pp 71-103

\bibitem{FW} M. Freidlin, A. Wentzell, \emph{Random perturbations of dynamical systems}, Springer, Berlin, 1984.


\bibitem{Gerhold} S. Gerhold, \emph{The Hartman-Watson distribution revisited: asymptotics for pricing Asian options}, 
Journal of Applied Probability, Vol. 48, No. 3, 892-899 (2011)

\bibitem{HelN} B. Helffer, F. Nier, \emph{Hypoelliptic estimates and spectral theory for Fokker-Planck operators and Witten Laplacians}. Lecture Notes in Mathematics, 1862. Springer-Verlag, Berlin, (2005).

\bibitem{HerN} F. H\'erau, F. Nier, \emph{Isotropic hypoellipticity and trend to equilibrium for the Fokker-Planck equation with high degree potential}, Arch. Ration. Mech. Anal., 171(2):151-218, (2004).

\bibitem{Hor} L. H\"ormander, \emph{Hypoelliptic second order differential equations}, Acta Math., 119, (1967), 147-171.

\bibitem{KMM} V. Konakov, S. Menozzi, S. Molchanov, \emph{Explicit parametrix and local limit theorems for some degenerate diffusion processes},  Annales de l'Institut Henri Poincar\'e (S\'erie B). 46-4 (2010), 908-923.

\bibitem{Kunita} H. Kunita, \emph{On the representation of solutions of stochastic differential equations}, Séminaire de probabilités de Strasbourg, 14 (1980), p. 282-304 

\bibitem{Leandre1} R. L\'eandre, \emph{Majoration en temps petit de la densit\'e d'une diffusion d\'eg\'en\'er\'ee}, Probab. Theory Related Fields, 74 (1987), no. 2, 289-294.

\bibitem{Leandre2}  R. L\'eandre, \emph{Minoration en temps petit de la densit\'e d'une diffusion d\'eg\'en\'er\'ee}, J. Funct. Anal. 74 (1987), no. 2, 399-414.




\bibitem{PP}A. Pascucci, S. Polidoro, \emph{Harnack inequalities and Gaussian estimates for a class of hypoelliptic operators}, Trans. of Amer. Math. Soc., Vol 358, No. 11 (2006), 4873-4893

\bibitem{MY} H. Matsumoto, M. Yor, \emph{Exponential functionals of Brownian motion, I: Probability laws at fixed time}, Probability Surveys Vol. 2 (2005) 312?347 

\bibitem{Shielder} M. Schilder, \emph{Some asymptotic formulars for Winer integrals}, Tran. of Amer. Math. Soc., Vol 125, No.1 (1966), pp 63-85

\bibitem{SV} D. W. Stroock, S. R. S. Varadhan, \emph{On the support of diffusion processes with applications to the strong maximum principle}. Proceedings of the Sixth Berkeley Symposium on Mathematical Statistics and Probability, 3(638), 333-359.


\bibitem{var} S. R. S. Varadhan, \emph{Diffusion processes in a small time interval}, Communications on Pure and Applied Mathematics 20.4 (1967), 659-685.

\bibitem{Vi} C. Villani, \emph{Hypocoercivity}, Mem. Amer. Math. Soc. 202 (2009), no. 950.


\bibitem{YAM} Y. Yamato \emph{Stochastic differential equations and Nilpotent Lie algebras}. Zeitschrift fur Wahrscheinlichkeitstheorie und Verwandte Gebiete.(1979) 47(2):213-229.

\bibitem{Yor-92}M. Yor, \emph{On Some Exponential Functionals of Brownian Motion}, Advances in Applied Probability
Vol. 24, No. 3 (1992), pp. 509-531

\end{thebibliography}
\end{document}